\documentclass[reqno]{amsart}

\usepackage{graphicx}
\usepackage{mathrsfs}
\usepackage{enumerate}
\usepackage{amsmath}
\usepackage{amsfonts}
\usepackage{latexsym}
\usepackage{wrapfig}
\usepackage{float}
\usepackage{verbatim}
\usepackage{hyperref}
\usepackage{url}
\usepackage{bbold}

\usepackage{pgf,tikz,pgfplots}
\pgfplotsset{compat=1.14}
\usepackage{mathrsfs}
\usetikzlibrary{arrows}
\usepackage[export]{adjustbox}

\definecolor{grey}{rgb}{0.8,0.8,0.8}

\newcommand{\A}{\mathscr{A}}
\newcommand{\B}{\mathcal{B}}
\newcommand{\C}{\mathcal{C}}
\newcommand{\F}{\mathcal{F}}
\newcommand{\N}{\mathbb{N}}
\newcommand{\Q}{\mathbb{Q}}
\newcommand{\Z}{\mathbb{Z}}
\newcommand{\X}{\mathscr{X}}
\newcommand{\Y}{\mathscr{Y}}

\newcommand{\bu}{\mathbf{u}}
\newcommand{\bv}{\mathbf{v}}
\newcommand{\bw}{\mathbf{w}}
\newcommand{\bx}{\mathbf{x}}
\newcommand{\by}{\mathbf{y}}

\newcommand{\g}{\gamma}

\newcommand{\Brac}{\text{Brac}}

\newcommand{\df}[1]{\emph{#1}}

\newtheorem{theorem}{Theorem}[section]
\newtheorem*{theorem*}{Theorem}
\newtheorem{lemma}[theorem]{Lemma}
\newtheorem{proposition}[theorem]{Proposition}
\newtheorem{crl}[theorem]{Corollary}

\theoremstyle{definition}
\newtheorem{definition}[theorem]{Definition}
\newtheorem{example}[theorem]{Example}
\newtheorem{conjecture}[theorem]{Conjecture}
\newtheorem*{conjecture*}{Conjecture}

\theoremstyle{remark}
\newtheorem{remark}[theorem]{Remark}

\numberwithin{equation}{section}

\usepackage{pgf,tikz,pgfplots}
\pgfplotsset{compat=1.14}
\usepackage{mathrsfs}
\usetikzlibrary{arrows}

\begin{document}

\title[Unistructurality of cluster algebras from surfaces]{Unistructurality of cluster algebras from surfaces without punctures}

\author{V\'eronique Bazier-Matte}
\address{Laboratoire de combinatoire et d'informatique math\'ematique, Universit\'e du Qu\'ebec \`a Montr\'eal, CP 8888, Succ. Centre-ville, Montr\'eal (Qu\'ebec), H3C 3P8, Canada}
\email{bazier-matte.veronique@courrier.uqam.ca}

\author{Pierre-Guy Plamondon}
\address{Laboratoire de Math\'ematiques d'Orsay, Universit\'e Paris-Sud, CNRS, Universit\'e Paris-Saclay, 91405 Orsay, France}
\email{pierre-guy.plamondon@math.u-psud.fr}

\keywords{}
\thanks{This project was initiated during a research stay of the first author at Universit\'e Paris-Sud XI, with the financial support of schorlarships from CRM and Mitacs Globalink for international internship.  The first author is supported by the Alexander Graham Bell Canada Graduate Scholarships-Doctoral from NSERC.
The second author is supported by the French ANR grant SC3A (ANR-15-CE40-0004-01) and by a PEPS ``Jeune chercheuse, jeune chercheur'' grant.}  

\definecolor{grey}{rgb}{0.75,0.75,0.75}

\begin{abstract}
A cluster algebra is unistructural if the set of its cluster variables determines its clusters and seeds.  It is conjectured that all cluster algebras are unistructural. In this paper, we show that any cluster algebra arising from a triangulation of a marked surface without punctures is unistructural.  Our proof relies on the existence of a positive basis known as the bracelet basis, and on the skein relations.  We also prove that a cluster algebra defined from a disjoint union of quivers is unistructural if and only if the cluster algebras defined from the connected components of the quiver are unistructural.
\end{abstract}


\maketitle

\tableofcontents


\section{Introduction}
Cluster algebras are commutative algebras that come equipped with a rich structure: 
their generators, the \emph{cluster variables}, are grouped into finite sets called \emph{clusters}, each of which is endowed with a matrix (or a quiver) to form a \emph{seed},
in such a way that all seeds can be deduced from any one of them by a process called \emph{mutation}.
Ever since their introduction and fundamental study by S.~Fomin and A.~Zelevinsky \cite{FZ02, FZ03, FZ07} (and also with A.~Berenstein in \cite{BFZ05}), 
cluster algebras have received a lot of attention due to their appearance in many fields of mathematics.
Survey papers and books on cluster algebras include \cite{Marsh13, LW14, W14, GR17}; let us also mention \cite{GSV10}, which focuses on links with Poisson geometry, \cite{Reiten10, Amiot11, Keller12, Plamondon18} on links with representation theory of associative algebras, \cite{GLS13} on links with Lie theory, and \cite{Schiffler18} on links with triangulations of surfaces.

In this paper, we are interested in structural questions for cluster algebras.  
If one forgets the cluster algebra structure given by the seeds, many cluster algebras become isomorphic as rings.
In \cite{ASS12, ADS14}, a suitable notion of morphism of cluster algebras was introduced: roughly speaking, such a morphism should send seeds into seeds.
An isomorphism of cluster algebra is then an isomorphism of rings satisfying the strong condition that it should preserve seeds.

In \cite{ASS14} was introduced the notion of unistructurality: a cluster algebra is unistructural if the set of its cluster variables determines its seeds (see Section~\ref{sect::prelim}, where we discuss several variants of the definition).  

\begin{conjecture*}[1.2 of \cite{ASS14}]
Any cluster algebra is unistructural.
\end{conjecture*}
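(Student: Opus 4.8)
The plan is to reduce the conjecture to a single ring-theoretic question: whether the \emph{compatibility} relation among cluster variables is determined by the ambient ring together with the set of cluster variables. Suppose $\mathcal{A}$ and $\mathcal{A}'$ are two cluster algebra structures on one commutative ring $R$ sharing the same set $X$ of cluster variables; I must show they have the same clusters and seeds. Call two elements of $X$ \emph{compatible} if they lie in a common cluster. Using that the cluster complex is a flag complex, so that the clusters are exactly the maximal cliques of the compatibility graph, it suffices to prove two things: (i) the compatibility relation on $X$ depends only on the pair $(R,X)$, not on the chosen cluster structure; and (ii) once the clusters of $\mathcal{A}$ and $\mathcal{A}'$ are known to agree, their exchange matrices agree as well.

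First I would detect compatibility through cluster monomials. Let $\mathcal{M}$ denote the set of \emph{cluster monomials}, the products $\prod_{x \in C} x^{a_x}$ with $C$ a cluster and each $a_x \geq 0$. The aim is to prove that $x,y \in X$ are compatible if and only if $xy \in \mathcal{M}$, and that $\mathcal{M}$ is itself intrinsic to $(R,X)$. The tool for this is a \emph{positive basis} $\mathcal{B}$ of $\mathcal{A}$ in which the cluster monomials appear as distinguished basis elements, so that an element of $R$ is a cluster monomial precisely when it expands as a single basis vector with coefficient $1$. Granting that $\mathcal{B}$ and this positivity-based characterization are themselves determined by $(R,X)$, membership in $\mathcal{M}$, and hence compatibility, becomes a ring-theoretic condition independent of the cluster structure. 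This is exactly the role played in the unpunctured surface case by the bracelet basis, whose structure constants are governed by the skein relations.

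Next I would recover the seeds. Once (i) is established, the clusters of $\mathcal{A}$ and $\mathcal{A}'$ coincide; fix a common cluster $C = \{x_1, \dots, x_n\}$. For each index $k$ there is a unique cluster variable $x_k'$ such that replacing $x_k$ by $x_k'$ yields another cluster, and this replacement is dictated entirely by the compatibility data already recovered. The exchange relation $x_k x_k' = M^+ + M^-$, with $M^+$ and $M^-$ monomials in the remaining variables $x_1, \dots, x_{k-1}, x_{k+1}, \dots, x_n$, is an identity in $R$; its exponents read off the $k$-th column of the exchange matrix in the standard normalization. Thus both structures assign the same exchange matrix to $C$, and since every seed is mutation-equivalent to the one at $C$, the two cluster structures coincide, giving unistructurality.

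The hard part is the positivity input of the second step: in full generality one does not know that a canonical positive basis exists with cluster monomials singled out by a unique-expansion property, nor that such a basis is recoverable from $(R,X)$ alone. Candidate bases — the theta basis of Gross--Hacking--Keel--Kontsevich, or generic and greedy bases where they are available — are controlled only under hypotheses on coefficients or under the existence of a geometric model, and the rigidity required (positivity of the structure constants together with an intrinsic characterization of $\mathcal{B}$) is not known without such extra input. This is precisely the obstruction that confines the present paper to cluster algebras from unpunctured surfaces, where the bracelet basis and the skein relations furnish both the positive basis and the explicit multiplication rules needed to carry out the detection of compatibility underlying step (i); removing this restriction, and thereby settling the conjecture in full, would require producing the analogous intrinsic positive basis for an arbitrary cluster algebra.
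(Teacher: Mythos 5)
You have not proved the statement, and to your credit you say so explicitly: your final paragraph concedes that the decisive input---a positive basis, determined by the pair $(R,\X)$ alone, in which cluster monomials are singled out by a unique-expansion property---is not available for a general cluster algebra. That concession is the whole gap. The statement you were given is stated in the paper as a \emph{conjecture} precisely because no such general mechanism was known; the paper itself proves only the unpunctured-surface case (Theorem~\ref{theo::main}), and your outline of how the bracelet basis of \cite{MSW13}, its positivity \cite{Thu14}, and the skein relations drive the detection of compatibility is essentially the paper's own argument for that case, combined with the recovery of exchange graphs and quivers from clusters via \cite{GSV08} as in Proposition~\ref{prop::clustersAreSufficient}. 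So as a proof of the conjecture your text is a strategy with an acknowledged missing lemma, not a proof.

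Two further points deserve flagging. First, even granting an intrinsic positive basis, your step (i) quietly uses a second unproven input: that clusters are exactly the maximal cliques of the compatibility graph, i.e.\ that the cluster complex is flag. This holds for unpunctured surfaces (compatibility is non-crossing of arcs, and any maximal non-crossing collection is a triangulation, by \cite{FST08}), but it was open in general at the time of this paper; your reduction to ``compatibility is intrinsic'' silently assumes it. Second, your step (ii) recovering exchange matrices from clusters rests on \cite{GSV08}, which requires geometric type or a full-rank exchange matrix---harmless with trivial coefficients, but a genuine restriction if one aims at the conjecture for arbitrary coefficients. Finally, note that the conjecture was subsequently proved in full generality by Cao and Li \cite{CL18} by a quite different route (compatibility-degree/denominator-vector arguments rather than positive bases), which confirms your closing diagnosis that the intrinsic-positive-basis approach is the obstruction confining this paper to surfaces, while also showing that removing the obstruction was not, in the end, how the general case was settled.
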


The conjecture was proved in \cite{ASS14} for skew-symmetric cluster algebras of Dynkin type or of rank~$2$, and in~\cite{BM16} for cluster algebras of type~$\widetilde{\mathbb{A}}_n$ (all of these cluster algebras are taken with trivial coefficients).
In the latter case, the first author of the present paper used the geometric model of~\cite{FST08} (see Section~\ref{sect::surfaces}), in which cluster algebras of type~$\widetilde{\mathbb{A}}_n$ are linked to triangulations of an annulus.  In this paper, we extend these methods to any unpunctured surface.

\begin{theorem*}[\ref{theo::main}]
    Any cluster algebra with trivial coefficients associated to a triangulation of an unpunctured surface is unistructural.
\end{theorem*}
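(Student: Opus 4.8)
The plan is to work entirely within the geometric model of \cite{FST08}, under which the cluster variables of $\mathcal{A}$ correspond bijectively to isotopy classes of arcs on the unpunctured surface $(S,M)$, clusters correspond to triangulations, and mutation corresponds to flipping an arc. The first reduction is to the connected case: using the announced result that a cluster algebra attached to a disjoint union of quivers is unistructural exactly when each connected component is, I may assume the surface is connected. By the reformulations of unistructurality recorded in Section~\ref{sect::prelim}, it then suffices to prove that the \emph{compatibility relation} on the set $\mathcal{X}$ of cluster variables --- namely, which pairs of cluster variables lie in a common cluster --- is determined by $\mathcal{X}$ together with the ring structure of $\mathcal{A}$, independently of the chosen cluster structure. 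Once this relation is intrinsic, the clusters are recovered as the maximal pairwise-compatible subsets of $\mathcal{X}$, all of the same cardinality (equal to the rank, since every triangulation has the same number of arcs), and the exchange matrix of each seed is read off from the binomial exchange relations among the variables of a cluster, which are themselves intrinsic to the ring.

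The heart of the argument is therefore an intrinsic, ring-theoretic detection of crossing. Fix two cluster variables $u = x_\gamma$ and $v = x_\delta$. If the arcs $\gamma$ and $\delta$ do not cross, then $uv$ is a single element of the bracelet basis $\mathcal{B}$, the one indexed by the multicurve $\gamma \cup \delta$, and is a cluster monomial. If instead $\gamma$ and $\delta$ cross, I would apply the skein relations to resolve a crossing, obtaining an identity $uv = w_{+} + w_{-}$ in which $w_{+}$ and $w_{-}$ are the basis elements indexed by the two smoothings (products of cluster variables and bracelets); this exhibits $uv$ as a positive sum of at least two distinct basis elements, so $uv$ is not a bracelet basis element and in particular not a cluster monomial of $\mathcal{A}$. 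Thus, within $\mathcal{A}$, compatibility of $u$ and $v$ is equivalent to $uv$ being a cluster monomial, and the skein relations furnish an explicit witness to incompatibility.

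It remains to transport this dichotomy to an arbitrary competing cluster structure $\mathcal{A}'$ on the same ambient field with the same set $\mathcal{X}$. Suppose $\gamma$ and $\delta$ cross, so that the identity $uv = w_{+} + w_{-}$ holds in the ring. I would argue that $u$ and $v$ cannot lie in a common cluster of $\mathcal{A}'$: were they compatible there, the monomial $uv$ would be a cluster monomial of $\mathcal{A}'$, whose Laurent expansion in any cluster of $\mathcal{A}'$ is pointed with a well-defined $g$-vector, whereas the right-hand side, a genuine positive sum of two distinct elements that are each positive in every cluster of $\mathcal{A}'$, cannot admit such an expansion. The reverse implication, that arcs compatible in $\mathcal{A}$ remain compatible in $\mathcal{A}'$, then follows by a counting argument, since maximal compatible sets have fixed size. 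Granting this, the compatibility relations of $\mathcal{A}$ and $\mathcal{A}'$ coincide, hence so do their clusters and their seeds.

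I expect the main obstacle to be precisely this last passage from $\mathcal{A}$ to $\mathcal{A}'$. The skein identity $uv = w_{+}+w_{-}$ is intrinsic to the ring, but its reading as ``$uv$ is not a cluster monomial'' was extracted from the bracelet basis of $\mathcal{A}$, and one must force the same conclusion for $\mathcal{A}'$ without access to its basis. This is where the \emph{positivity} of the bracelet basis and of cluster-variable Laurent expansions must be leveraged, to guarantee that the two smoothing terms genuinely survive, with no cancellation, and that their sum cannot masquerade as a single cluster monomial in any cluster structure. Handling arcs that cross several times, where one application of the skein relations yields terms that must be re-expanded, will require an induction on the number of crossings together with careful bookkeeping to ensure at least two distinct surviving basis elements.
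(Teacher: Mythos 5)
Your first half---detecting crossings via skein relations and the bracelet basis, so that for non-crossing arcs $uv$ is a cluster monomial while for crossing arcs $uv$ expands positively in the basis $\B\cup\B'$---is essentially the paper's Lemma~\ref{termeva}, obtained there from Theorem~\ref{positivebasis} and Lemma~\ref{noclosedloops} (note the paper only needs \emph{at least one pure-arc term}, not two distinct basis elements, which is fortunate since e.g.\ in the disk with four marked points the two diagonals satisfy $uv=2\cdot 1$, a single basis element). The genuine gap is exactly the step you flag as the ``main obstacle'': transporting the conclusion to the competing structure $\A(\bv,R)$. Your argument there rests on two unjustified claims. First, positivity of the bracelet terms is known only with respect to clusters of the \emph{surface} algebra $\A(\bu,Q)$ (Lemma~\ref{LaurentBracelet}); writing a bracelet as a positive Laurent polynomial in $\X$ and then substituting the positive $\bv$-expansions of the elements of $\X$ does \emph{not} yield positivity in $\bv$, because the denominator (a monomial in $\X$) becomes a positive Laurent \emph{polynomial} in $\bv$, and dividing by a positive polynomial destroys positivity. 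Second, there is no general principle that a positive sum of two elements with positive expansions cannot be pointed with a well-defined $g$-vector, so no contradiction with $uv$ being a cluster monomial of $\A(\bv,R)$ is actually derived; you state the expectation that positivity ``must be leveraged'' but do not supply the mechanism.

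The paper supplies that mechanism differently, and you would need these ingredients. One works inside the cluster $\bv$ containing $v_1,v_2$, so that $v_1v_2$ is a genuine monomial in $\bv$; one writes the arc part as $p_{\bv}/m_{\bv}$ and the bracelet part first as a quotient $p_{\X}/m_{\X}$ of positive expressions in $\X$, then as quotients of positive Laurent polynomials in $\bv$, and clears \emph{all} denominators to get the polynomial identity $m_{\bv}m''_{\bv}p'_{\bv} = m'_{\bv}p''_{\bv}(v_1v_2m_{\bv}-p_{\bv})$. The sign analysis (an all-positive left-hand side against the factor $v_1v_2m_{\bv}-p_{\bv}$, where $v_1v_2m_{\bv}$ is a single monomial) forces $v_1v_2m_{\bv}=p_{\bv}$; then \cite[Lemma 3.7]{CKLP13} (a cluster variable that is a Laurent monomial in a cluster belongs to that cluster) forces the sum $\sum_{b\in\B}m_b b$ to collapse to a single term that is a monomial in $\bv$, and linear independence of cluster monomials identifies that term with $v_1v_2$, contradicting that the arcs cross. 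This replaces your pointedness claim entirely. Finally, your closing ``counting argument'' for the reverse inclusion (compatible in $\A(\bu,Q)$ implies compatible in $\A(\bv,R)$) is also too weak as stated: fixed cardinality of maximal compatible sets does not by itself rule out that some cluster of $\A(\bu,Q)$ fails to be a cluster of $\A(\bv,R)$; the paper instead propagates the identification of clusters using \cite[Theorem 5]{GSV08}, connectivity of the exchange graph, and Remark~\ref{rema::quivers or opposite}.
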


The proof, given in Section~\ref{sect::proof}, relies on the existence of a positive basis (the \emph{bracelet basis} of \cite{MSW13}) and on the skein relations to analyze products of cluster variables.  On the way, we obtain a reduction result (Proposition~\ref{prop::connected}) showing that it is sufficient to prove the unistructurality conjecture for cluster algebras defined from connected quivers.

Shortly after this paper appeared, the conjecture was proved in full generality by P.~Cao and F.~Li \cite{CL18}.


\section{Cluster algebras and unistructurality}\label{sect::prelim}

\subsection{Cluster algebras}

Let us recall the main definitions and properties of cluster algebras.  We will mainly follow the exposition of~\cite{FZ07}, with the additional restrictions that we will only consider skew-symmetric cluster algebras (which allows us to use quivers instead of matrices) with trivial coefficients.
Let us mention that examples of all notions recalled in this section can be easily generated using B.~Keller's applet \cite{KellerApp} or Sage \cite{Sage}.

Our first ingredient is that of quiver mutation.  A quiver is an oriented graph (possibly with multiple edges); if~$Q$ is a quiver, we denote by~$Q_0$ its set of vertices,~$Q_1$ its set of arrows, and by~$s,t:Q_1\to Q_0$ the maps sending each arrow to its source or its target, respectively.

\begin{definition}
  Let~$Q$ be a finite quiver without oriented cycles of length~$1$ or~$2$, and let~$i$ be a vertex of~$Q$.
  The \emph{mutation of~$Q$ at~$i$} is the quiver~$\mu_i(Q)$ defined by modifying~$Q$ as follows:
  \begin{enumerate}
      \item for each path of length~$2$ of the form~$h\xrightarrow{\alpha} i \xrightarrow{\beta} j$ (recall that~$i$ is fixed), add an arrow~$h\xrightarrow{[\beta\alpha]} j$;
      \item reverse the orientation of each arrow having~$i$ as source or target;
      \item remove the arrows from a maximal set of disjoint oriented cycles of length~$2$ created in the two previous steps.
  \end{enumerate}
\end{definition}

It is immediate from the definition that mutation of quivers at a given vertex is an involution.  

\begin{definition}
  A \emph{seed} is a pair~$(\bu, Q)$, where
  \begin{itemize}
      \item $Q$ is a finite quiver without oriented cycles of length~$1$ or~$2$, and with vertex set~$Q_0 = \{1, \ldots, n\}$;
      \item $\bu = (u_1, \ldots, u_n)$ is a free generating set of the field~$\Q(x_1, \ldots, x_n)$.  Note that the variables in~$\bu$ are indexed by the vertices of~$Q_0$.
  \end{itemize}
  
  The \emph{mutation of~$(\bu, Q)$ at~$i$} is the pair~$\mu_i(\bu, Q) = (\bu', Q')$, where
  \begin{itemize}
      \item $Q' = \mu_i(Q)$ is the mutation of~$Q$ at~$i$;
      \item $u'_{\ell} = u_{\ell}$ if~$\ell\neq i$, and~$u'_i$ satisfies the \emph{exchange relation}
                   \[
                   u'_i = \frac{\prod_{\stackrel{\alpha\in Q_1}{t(\alpha) = i}} u_{s(\alpha)} + \prod_{\stackrel{\alpha\in Q_1}{s(\alpha) = i}} u_{t(\alpha)}}{u_i}. 
                   \]
  \end{itemize}
  Two seeds are \emph{mutation-equivalent} if one can be obtained from the other by a sequence of mutations.
  Two seeds are \emph{isomorphic} if they are the same up to reordering of their cluster variables and relabelling of the correponding vertices of their quivers.
\end{definition}

One readily checks that the mutation of a seed is still a seed, and that mutation of seeds at a given vertex is an involution.

\begin{definition}
    Let~$(\bu, Q)$ be a seed.  Consider all seeds~$(\bu', Q')$ obtained by iterated mutations of~$(\bu, Q)$. 
    \begin{itemize}
        \item The~$n$-tuples~$\bu'$ appearing in those seeds are called \emph{clusters}.
        \item The elements of clusters are called \emph{cluster variables}.
        \item The \emph{cluster algebra}~$\A(\bu, Q)$ is the subring of the \emph{ambient field}~$\F$ generated by all cluster variables, where $\F  = \Q(x_1, \ldots, x_n)$.
        \item The integer~$n$ is called the \emph{rank of the cluster algebra~$\A(\bu, Q)$}.
        \item A \emph{cluster monomial} is a product of cluster variables belonging to the same cluster.
        \item The \emph{exchange graph} of~$\A(\bu, Q)$ is the graph whose vertices are isomorphism classes of seeds of~$\A(\bu, Q)$ and where two vertices are joined by an edge if one is obtained by the other by applying one mutation.
    \end{itemize}
\end{definition}

\begin{example}
If the quiver~$Q$ has one vertex and no arrows, then the cluster algebra~$\A(\bu, Q)$ is~$\Z[u, \frac{2}{u}]$.  Its exchange graph has two vertices joined by an edge.
\end{example}

Let us recollect some of the most important results on cluster algebras.

\goodbreak

\begin{theorem}\label{theo::recollectionsCluster} 
$\phantom{x}$
    \begin{itemize}
        \item (Laurent Phenomenon) \cite{FZ02} A cluster algebra~$\A(\bu, Q)$ is contained in the Laurent polynomial ring~$\Z[u_1^{\pm 1}, \ldots, u_n^{\pm 1}]$.
              Equivalently, every cluster variable of~$\A(\bu, Q)$ is a Laurent polynomial with integer coefficients in the cluster variables of any given cluster.
              
        \item (Positivity) \cite{LS15, GHKK18} Every cluster variable of~$\A(\bu, Q)$ is a Laurent polynomial with \emph{non-negative} integer coefficients in the cluster variables of any given cluster.
        
        \item (Finite type) \cite{FZ03} A cluster algebra~$\A(\bu, Q)$ has only finitely many cluster variables if and only if~$Q$ is mutation-equivalent to a disjoint union of quivers which are orientations of Dynkin diagrams of type~$ADE$.
        
        \item (Linear independence of cluster monomials) \cite{CKLP13, GHKK18, CaoLi} Cluster monomials are linearly independent over~$\Z$.
    \end{itemize}
\end{theorem}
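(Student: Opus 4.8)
This statement recollects four deep and independent results, each established in the cited literature rather than derivable from the definitions recalled above; my plan is therefore to organize the proposal around the principal technique behind each item and to flag at the outset that the genuine content lies in external machinery. For the \emph{Laurent phenomenon} I would follow the Caterpillar Lemma of Fomin--Zelevinsky: fixing an initial cluster $\bu$, one argues by induction on the distance in the exchange graph that every cluster variable lies in $\Z[u_1^{\pm1},\dots,u_n^{\pm1}]$. The inductive step concerns a short path of mutations and reduces to checking that the two binomial numerators appearing in consecutive exchange relations are coprime as Laurent polynomials, so that the denominators produced by the exchange relation genuinely cancel; the delicate point is precisely this coprimality, where the absence of oriented $2$-cycles in the quiver is used.

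For \emph{positivity} I would invoke the Gross--Hacking--Keel--Kontsevich theory: one realizes the cluster algebra inside an algebra of theta functions attached to a scattering diagram, in which the coefficients expressing a cluster variable in terms of a fixed cluster are counts of broken lines and are thus non-negative integers by construction. For the surface case specifically, one could instead use the perfect-matching expansion formulas of Musiker--Schiffler--Williams, where each Laurent coefficient is the cardinality of a set of matchings of a snake graph, hence manifestly non-negative.

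For the \emph{finite type} classification the plan is to translate finiteness of the set of cluster variables into finiteness of a root system: the denominator vectors of the non-initial cluster variables biject with the almost positive roots of the root system attached to~$Q$, so having only finitely many cluster variables forces that root system to be finite, which in the skew-symmetric (simply-laced) case means type $ADE$. For \emph{linear independence of cluster monomials} I would use the $g$-vector as a separating invariant: distinct cluster monomials carry distinct $g$-vectors, and a triangularity argument with respect to a suitable order on these vectors yields $\Z$-linear independence; equivalently, the representation-theoretic route deduces the statement from injectivity of the Caldero--Chapoton map on the relevant generic modules. The common obstacle is that none of the four admits a short derivation from the material recalled so far: each rests on substantial external machinery --- coprimality inductions, scattering diagrams, root-theoretic combinatorics, or cluster categories --- so the honest substance of the proof is the collection of citations together with an indication of which machine powers each part.
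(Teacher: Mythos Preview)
Your proposal is appropriate and in fact goes further than the paper does: the paper offers no proof of this theorem at all, treating it purely as a recollection of results from the cited references \cite{FZ02, FZ03, LS15, GHKK18, CKLP13, CaoLi}. Your sketch of the machinery behind each item (Caterpillar Lemma, scattering diagrams / snake graphs, denominator vectors versus almost positive roots, $g$-vector separation) is accurate and matches the content of those references, so there is nothing to correct.
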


We shall also need the following result on the structure of non-initial variables.

\begin{lemma}[Lemma 3.7 of \cite{CKLP13}]
    Let~$\A(\bu, Q)$ be a cluster algebra.  If a cluster variable in~$\A(\bu, Q)$ is a Laurent monomial in~$\bu$ with coefficient~$\pm 1$, then it belongs to~$\bu$.
\end{lemma}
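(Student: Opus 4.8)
The plan is to first pin down the sign, then split according to whether the exponent vector is non-negative, disposing of the non-negative case by linear independence of cluster monomials and isolating the negative-exponent case as the real content. Write the given cluster variable as $x = \pm\, u_1^{a_1}\cdots u_n^{a_n} = \pm\,\bu^{\mathbf a}$ with $\mathbf a=(a_1,\dots,a_n)\in\Z^n$. First I would invoke the Positivity theorem with respect to the initial cluster $\bu$: the Laurent expansion of $x$ in $u_1,\dots,u_n$ has non-negative integer coefficients. Since $x$ is a single Laurent monomial, its one and only coefficient, which is $\pm 1$, must be non-negative; hence it equals $+1$ and $x=\bu^{\mathbf a}$. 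It then remains to prove that $\mathbf a=e_j$ for some $j$, i.e.\ that $x=u_j\in\bu$.

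Next, suppose $\mathbf a\ge 0$. Then $\bu^{\mathbf a}$ is a cluster monomial in the initial cluster, and $x$ is itself a cluster monomial (a single cluster variable to the first power). These two cluster monomials are equal as elements of $\A(\bu,Q)$, so by the linear independence of cluster monomials---which in particular forces distinct cluster monomials to be distinct elements---they must be one and the same cluster monomial. Comparing them as products of cluster variables, the support $\{x\}$ must coincide with $\{u_i:a_i>0\}$ with all exponents equal to $1$; thus $\sum_i a_i=1$, $\mathbf a=e_j$, and $x=u_j\in\bu$. So the statement holds as soon as the exponent vector is non-negative.

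The crux is therefore to rule out negative exponents, equivalently to show that a non-initial cluster variable can never be a Laurent monomial in $\bu$. The clean route uses the $g$-vector/$F$-polynomial decomposition $x=\bu^{\mathbf g}\,F_x(\bu^{\mathbf b_1},\dots,\bu^{\mathbf b_n})$, where $\mathbf b_j$ is the $j$-th column of the exchange matrix $B$, the $F$-polynomial $F_x$ has constant term $1$ and non-negative coefficients, and $F_x=1$ precisely when $x\in\bu$. If $F_x\neq 1$, positivity prevents cancellation: the constant term of $F_x$ contributes the monomial $\bu^{\mathbf g}$ with coefficient $1$, while every further term contributes a positive multiple of some $\bu^{\mathbf g + B\alpha}$ with $\alpha\ge 0$, $\alpha\neq 0$; for $x$ to collapse to a single monomial all of these would have to coincide with $\bu^{\mathbf g}$, pushing its coefficient above $1$ and contradicting the $\pm1$ hypothesis. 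Hence $F_x=1$, $x\in\bu$, and in particular $\mathbf a=\mathbf g\ge0$, which also recovers the previous case. The main obstacle is exactly the structural input behind this last step---the separation-of-additions formula together with the positivity of $F$-polynomials (or, as in \cite{CKLP13}, the corresponding statement for the index of a rigid decorated representation)---while everything else is bookkeeping with Positivity and linear independence.
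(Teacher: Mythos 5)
The paper never proves this lemma: it is imported verbatim from \cite{CKLP13}, so your proposal can only be measured against the cited source's argument, which runs through decorated representations. Your bookkeeping is correct and correctly isolates where the difficulty sits. Positivity does fix the sign; the separation formula $x=\bu^{\mathbf{g}}F_x(\hat y_1,\dots,\hat y_n)$, together with the facts that $F_x$ has constant term $1$ and non-negative integer coefficients, does show (by your no-cancellation argument) that a cluster variable equal to a single Laurent monomial with coefficient $1$ must satisfy $\mathbf{a}=\mathbf{g}$ and $F_x=1$. (Two small remarks: your intermediate case $\mathbf{a}\geq 0$ is subsumed by this and is redundant; and the linear-independence step there needs the strong form of the theorem --- formally distinct cluster monomials are distinct elements --- which is what \cite{CKLP13, GHKK18} actually establish, so that use is legitimate but should be said.)

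The problem is the step that carries all the weight: ``$F_x=1$ precisely when $x\in\bu$.'' The non-trivial direction of this claim is, by your own computation, \emph{equivalent} to the lemma (given separation plus positivity, ``$x$ is a Laurent monomial with coefficient $\pm1$'' if and only if ``$F_x=1$''), so as written your proposal is a reduction rather than a proof. Moreover, your closing attribution is off: this step does \emph{not} follow from ``the separation-of-additions formula together with the positivity of $F$-polynomials'' --- those are exactly the ingredients you have already spent, and nothing in them excludes a non-initial cluster variable of the form $\bu^{\mathbf{g}}$ with $F_x=1$ and $\mathbf{g}\notin\{e_1,\dots,e_n\}$. What closes the gap is the Derksen--Weyman--Zelevinsky correspondence (in substance this is also what \cite{CKLP13} uses, phrased via indices and $E$-invariants of decorated representations): a non-initial cluster variable corresponds to a nonzero representation $M$, and $F_M=\sum_e \chi(\mathrm{Gr}_e(M))\,y^e$ contains the monomial $y^{\dim M}\neq 1$ with coefficient $1$, hence $F_M\neq 1$. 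If you cite that fact explicitly, your argument becomes a correct proof of essentially the same depth as the original. Note also that for the cluster algebras this paper actually needs, there is a genuinely more elementary route: a cluster variable not in the cluster of a triangulation $\tau$ corresponds to an arc crossing $\tau$, so its snake-graph expansion \cite{MSW13} has at least two terms (the minimal and maximal matchings), and therefore can never be a Laurent monomial.
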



\subsection{Unistructurality}

Different cluster algebras can be isomorphic as rings without having the same exchange graphs.
A stronger notion of morphism and isomorphism of cluster algebras was thus introduced and studied in \cite{ASS12, ADS14}. 
In this paper, we shall not use the full scope of this formalism, but merely the following notion from \cite{ASS14}.

\begin{definition}[Section 5.1 of \cite{ASS14}]\label{defi::unistructurality}
  Let~$(\bu, Q)$ be a seed, and let~$\A(\bu, Q) \subseteq \Q(x_1, \ldots, x_n)$ be the corresponding cluster algebra.  Denote by~$\X$ the set of its cluster variables. 
  The cluster algebra~$\A(\bu, Q)$ is \emph{unistructural} if, for any~$n$-tuple~$\bv$ which forms a free generating set of~$\Q(x_1, \ldots, x_n)$, the following condition is satisfied:
  
  \begin{itemize}
      \item [(U)] if~$(\bv, R)$ is a seed such that the set of cluster variables of~$\A(\bv, R)$ is equal to~$\X$, then there is an isomorphism between the exchange graphs of~$\A(\bu, Q)$ and~$\A(\bv, R)$ and the two cluster algebras have the same clusters (up to permutation of the cluster variables in each cluster).
  \end{itemize}
\end{definition}

In our setting, we can get rid of the condition on the exchange graphs.  

\begin{proposition}\label{prop::clustersAreSufficient}
 A cluster algebra~$\A(\bu, Q)$ with set~$\X$ of cluster variables is unistructural if and only if the following condition is satisfied:
 \begin{itemize}
      \item [(U')] if~$(\bv, R)$ is a seed such that the set of cluster variables of~$\A(\bv, R)$ is equal to~$\X$, then the two cluster algebras have the same clusters (up to permutation of the cluster variables in each cluster).
  \end{itemize}
\end{proposition}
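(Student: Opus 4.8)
The plan is to prove both implications directly from Definition~\ref{defi::unistructurality}. The forward direction is immediate: if $\A(\bu, Q)$ is unistructural, then condition~(U) holds, and since (U) asserts both the existence of an exchange-graph isomorphism \emph{and} the equality of clusters, the second conjunct alone gives exactly~(U'). So the content lies entirely in the converse: assuming~(U'), I must manufacture the exchange-graph isomorphism demanded by~(U).

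First I would set up the data. Suppose $(\bv, R)$ is a seed whose cluster algebra $\A(\bv, R)$ has the same set of cluster variables $\X$ as $\A(\bu, Q)$. By~(U'), the two cluster algebras have the same set of clusters, up to permuting the variables within each cluster. The natural move is to define a map $\Phi$ from seeds of $\A(\bu, Q)$ to seeds of $\A(\bv, R)$ by sending a seed to the (unique) seed of $\A(\bv, R)$ whose underlying cluster is the same subset of $\X$; I would then argue $\Phi$ descends to a well-defined bijection on isomorphism classes of seeds, i.e.\ on vertices of the exchange graphs. The key point making this well-defined is that a seed is determined, up to the isomorphism used to define vertices of the exchange graph, by its cluster together with the ambient quiver structure; here I would invoke that the clusters coincide as sets and that both cluster algebras sit inside the same ambient field $\F = \Q(x_1, \ldots, x_n)$, so matching clusters identifies the corresponding seeds.

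The main obstacle is showing that $\Phi$ is a graph \emph{isomorphism}, that is, that it preserves adjacency in both directions: two clusters are joined by an edge in the exchange graph of $\A(\bu, Q)$ precisely when the corresponding clusters are joined by an edge in that of $\A(\bv, R)$. Adjacency means the two clusters differ in exactly one variable, and this is a purely set-theoretic condition on the clusters, namely $|\bu' \cap \bu''| = n - 1$. Since~(U') guarantees that the collections of clusters (as subsets of $\X$) coincide, the adjacency relation---being defined solely in terms of how clusters overlap as sets---is literally the same relation on the same vertex set. Thus the combinatorial structure of the exchange graph is forced by the family of clusters alone, and $\Phi$ automatically respects it.

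Hence the argument reduces to the observation that both the vertex set (isomorphism classes of seeds, determined by their clusters) and the edge relation (clusters sharing $n-1$ variables) of an exchange graph are recoverable from the bare collection of clusters. I would present this as the core lemma, then assemble the converse: (U') yields identical families of clusters, which forces identical exchange graphs, recovering the full content of~(U). The only subtlety to check carefully is that distinct seeds cannot share the same cluster---so that ``seed determined by its cluster'' is legitimate---which follows from the standard fact that in a cluster algebra a cluster determines its seed up to the relevant isomorphism, allowing the identification of exchange-graph vertices with clusters.
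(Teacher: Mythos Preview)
Your approach is essentially the same as the paper's, and the structure is correct. However, there is one point you treat as definitional when it is in fact a nontrivial theorem. You write that ``adjacency means the two clusters differ in exactly one variable, and this is a purely set-theoretic condition on the clusters.'' Only one direction of this is immediate: if two seeds are related by a single mutation, then by construction their clusters share exactly $n-1$ variables. The converse --- that any two clusters sharing $n-1$ variables must come from seeds related by a mutation --- is \emph{not} part of the definition of the exchange graph and requires proof. This is precisely \cite[Theorem~5]{GSV08}, which the paper invokes explicitly. Without it, you cannot conclude that the edge relation is determined by the family of clusters alone, so your ``core lemma'' does not follow from what you have written.

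You do correctly identify the other needed input, that a cluster determines its seed up to isomorphism; the paper attributes this to \cite[Theorem~4]{GSV08}. Once both results are cited, your argument and the paper's coincide.
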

\begin{proof}
Clearly, condition (U) implies (U').  To prove the converse, one uses the following results:
    \begin{enumerate}
        \item \cite[Theorem 4]{GSV08} for a (skew-symmetric) cluster algebra (with trivial coefficients), the same cluster (up to permutation of its variables) cannot appear in two different seeds (up to isomorphism);
        \item \cite[Theorem 5]{GSV08} in the same setting, two clusters (up to permutation) belong to adjacent seeds of the exchange graph if and only if they have exactly~$n-1$ cluster variables in common. 
    \end{enumerate}
    Therefore, if the set of clusters of a cluster algebra is known, then its exchange graph can be recovered using the adjacency relations in (2).  
    Thus two cluster algebras having the same set of clusters must have isomorphic exchange graphs.
\end{proof}

\begin{remark}
 Proposition~\ref{prop::clustersAreSufficient} is true in a greater generality, since the two results it uses, \cite[Theorems 4 and 5]{GSV08}, are proved under the following conditions. Let~$B$ be a skew-symmetrizable matrix, and~$\by$ be a set of coefficients in a semi-field.  Then \cite[Theorems 4 and 5]{GSV08} are true for the cluster algebra with coefficients~$\A(\bu, \by, B)$ if either
 \begin{itemize}
     \item the cluster algebra is of geometric type, or
     \item $B$ is of full rank.
 \end{itemize}
 Since all cluster algebras in this paper are of geometric type (they have trivial coefficients), we will not say more about this.
\end{remark}

\begin{remark}\label{rema::quivers or opposite}
    In the definition of unistructurality, we can also say something about seeds: a cluster algebra~$\A(\bu, Q)$ is unistructural if and only if the following condition is satisfied:
    
    \begin{itemize}
        \item[(U'')] if~$(\bv, R)$ is a seed such that the set of cluster variables of~$\A(\bv, R)$ is equal to~$\X$, and if~$R$ is a disjoint union of quivers~$R^1, \ldots, R^r$,  then there is a seed~$(\bv , Q^1 \sqcup \ldots \sqcup Q^r )$ which is mutation-equivalent to~$(\bu, Q)$ (up to isomorphism), where each~$Q^j$ is either~$R^j$ or the opposite quiver~$(R^j)^{op}$.
    \end{itemize}
    To see this, notice that at the end of the proof of Proposition~\ref{prop::clustersAreSufficient}, the adjacent clusters in the exchange graph also allow one to recover the exchange relations, and thus to deduce the quiver of the associated seed up to a change in the orientation of all the arrows in each of its connected components.  Hence, condition~(U') implies~(U'').  The converse holds trivially, so the equivalence is proved.
\end{remark}


\begin{example}
A cluster algebra of rank~$1$ is unistructural, since it only has two clusters and two cluster variables.
\end{example}

The above example is one of the few cluster algebras which are known to be unistructural.

\begin{theorem} The following results are known for skew-symmetric cluster algebras with trivial coefficients.
    \begin{enumerate}
        \item \cite[Theorem 5.2]{ASS14} Cluster algebras of rank~$2$ are unistructural.
        \item \cite[Theorem 0.1]{ASS14-2} Cluster algebras of Dynkin type are unistructural.
        \item \cite[Theorem 3.2]{BM16} Cluster algebras of affine type~$\tilde{\mathbb{A}}_n$ are unistructural.
    \end{enumerate}
\end{theorem}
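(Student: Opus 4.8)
The plan is to treat the three families separately, since they are established by genuinely different techniques; in each case the goal, after invoking Proposition~\ref{prop::clustersAreSufficient} to replace unistructurality by condition~(U'), is to describe the clusters of~$\A(\bu,Q)$ by a criterion that refers only to the set~$\X$ as a subset of the ambient field~$\F$, and not to any chosen seed. Once the clusters are characterised intrinsically, any second seed structure~$(\bv, R)$ producing the same~$\X$ must have the same clusters, and~(U') follows.

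For part~(1) I would use the Laurent phenomenon of Theorem~\ref{theo::recollectionsCluster} directly: a pair~$\{x, y\} \subseteq \X$ is a cluster if and only if it is a free generating set of~$\F$ and every element of~$\X$ lies in~$\Z[x^{\pm 1}, y^{\pm 1}]$. The forward implication is exactly the Laurent phenomenon. The converse is where rank~$2$ is essential: writing the cluster variables as a (bi-)infinite mutation sequence~$\ldots, x_{m-1}, x_m, x_{m+1}, \ldots$ with exchange relations~$x_{m-1}x_{m+1} = x_m^{\,d} + 1$, one checks that for any pair that is \emph{not} consecutive, some intermediate cluster variable fails to be a Laurent polynomial in the pair (the denominators produced by the exchange relations are not monomials). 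Since the criterion mentions only~$\X$ and~$\F$, it is seed-independent.

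For part~(2), the Dynkin case, the cluster algebra has finitely many cluster variables, in bijection with the almost positive roots of the corresponding root system, and its clusters are the maximal sets of pairwise compatible roots. I would recover this compatibility relation from~$\X$ via products: two cluster variables~$x, y$ lie in a common cluster if and only if~$xy$ is again a cluster monomial, equivalently (by Positivity and by linear independence of cluster monomials, Theorem~\ref{theo::recollectionsCluster}) if and only if~$xy$ is not a nontrivial positive sum of two or more distinct cluster monomials. The clusters are then the maximal cliques of this relation, and since the dichotomy is a property of the element~$xy \in \F$, it does not depend on the seed.

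For part~(3), the affine~$\widetilde{\mathbb{A}}_n$ case, I would pass to the geometric model of~\cite{FST08}: the cluster variables are the arcs of an annulus, the clusters are its triangulations, and two cluster variables are compatible precisely when the corresponding arcs do not cross. The relation is recovered from intersection numbers of arcs, which are read off from the leading terms of the products of the associated cluster variables --- the same product/crossing dictionary that, in its full skein-relation form, drives the main theorem of this paper. The main obstacle, common to the two infinite-type cases~(1) and~(3), is precisely this final point: one must prove that the compatibility (respectively non-crossing) data is genuinely determined by~$\X$ inside~$\F$, so that a competing cluster structure cannot realise a different incidence pattern on the very same variables; controlling this for infinitely many cluster variables, rather than the finite list available in part~(2), is the delicate step.
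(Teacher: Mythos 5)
Your proposal has genuine gaps in all three parts, and it is worth noting first that the paper itself contains no proof of this theorem: it is a survey statement, established purely by citation to \cite{ASS14}, \cite{ASS14-2} and \cite{BM16}, so a blind proof attempt must effectively reprove three papers. In part (1), the converse of your Laurent criterion --- that for every non-consecutive pair $\{x_i,x_j\}$ some cluster variable fails to be a Laurent polynomial in $x_i,x_j$ --- is dismissed with ``one checks,'' but this is the entire content of \cite[Theorem 5.2]{ASS14}: for $d\geq 2$ there are infinitely many cluster variables, their expansions in a non-cluster transcendence basis are not controlled by any Laurent phenomenon, and you give no mechanism (denominator vectors, gradings, recursions) for producing the failing variable. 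In part (3) you candidly identify the seed-independence of the crossing relation as ``the delicate step'' and then do not carry it out; that step is precisely what \cite[Theorem 3.2]{BM16} proves by explicit computation in the annulus model (and what the present paper's Theorem~\ref{theo::main} generalizes via the bracelet basis and skein relations). Saying that intersection numbers can be ``read off from the leading terms of products'' is unsubstantiated: leading with respect to which seed, and why would a competing structure $(\bv,R)$ induce the same order?

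Part (2) has two distinct problems. First, your dichotomy needs cluster monomials to form a basis of the finite-type cluster algebra \emph{with positive structure constants}; the ``Positivity'' item of Theorem~\ref{theo::recollectionsCluster} is Laurent positivity of a single cluster variable with respect to a cluster, which is a different and much weaker statement than positivity of the expansion of $xy$ in the cluster-monomial basis (the latter is a deep theorem about atomic/theta bases, nowhere among the results quoted in the paper). Second, even granting the dichotomy, ``$xy$ is a cluster monomial'' does not yield that $x$ and $y$ are compatible: you must exclude $xy = z_1 z_2$ for a \emph{different} compatible pair $\{z_1,z_2\}$, and linear independence of cluster monomials is of no help here, since it compares linear combinations, not factorizations of a single element. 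The proof of Theorem~\ref{theo::main} in this paper confronts exactly this point and escapes only because \cite[Lemma 3.7]{CKLP13} forces both sides of the analogous equality to be monomials in the single transcendence basis $\bv$, where unique factorization of monomials is automatic; your finite-type argument has no substitute for that step. So the proposal is a plausible plan, but in each case the assertion that compatibility is ``intrinsically determined by $\X$ inside $\F$'' is the theorem itself, not a consequence of the quoted general facts.
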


Moreover, there is no known counter-example to the following conjecture.

\begin{conjecture} \cite[Conjecture 1.2]{ASS14}\label{conj::unistructurality}
Any cluster algebra is unistructural.
\end{conjecture}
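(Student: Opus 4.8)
The plan is to prove the conjecture by the same philosophy that succeeds for unpunctured surfaces, but replacing the surface-specific input (the bracelet basis and the skein relations) by their universal analogues coming from scattering diagrams. By Proposition~\ref{prop::clustersAreSufficient} it suffices to verify condition~(U'), namely that the set~$\X$ of cluster variables determines the clusters; and the reduction of Proposition~\ref{prop::connected} lets us assume the quiver connected. So fix a seed~$(\bu,Q)$ with cluster-variable set~$\X$, let~$(\bv,R)$ be any seed with the same set~$\X$ of cluster variables, and observe that~$\A(\bu,Q)$ and~$\A(\bv,R)$ coincide as subrings of~$\F$, since both are generated by~$\X$. The goal is to show that their two families of clusters coincide; by Remark~\ref{rema::quivers or opposite} this simultaneously pins down the quivers up to reversal on each connected component, via \cite{GSV08}.

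The heart of the argument is an intrinsic characterization of clusters, one that refers only to~$\X$ and the ring structure and not to any chosen seed. To each free generating set~$C=\{y_1,\dots,y_n\}\subseteq\X$ and each~$x\in\X$ one attaches the Laurent expansion of~$x$ in the variables of~$C$, which by the Laurent phenomenon and positivity (Theorem~\ref{theo::recollectionsCluster}) has non-negative integer coefficients whenever~$C$ is a cluster; its denominator vector~$\mathbf{d}_C(x)$ and leading $\mathbf{g}$-vector~$\mathbf{g}_C(x)$ record the position of~$x$ relative to~$C$. The facts I would exploit are that, for a genuine cluster~$C$, one has~$\mathbf{d}_C(x)=-\mathbf{e}_i$ exactly when~$x=y_i$, that distinct cluster variables have distinct $\mathbf{g}$-vectors, and — the general replacement for the bracelet basis — that the cluster monomials are exactly those elements of the positive (theta) basis of~\cite{GHKK18} whose $\mathbf{g}$-vectors lie in the $\mathbf{g}$-vector fan. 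A first candidate characterization is then that a free generating set~$C\subseteq\X$ is a cluster if and only if every element of~$\X$ expands positive-Laurently in~$C$ and~$C$ is \emph{saturated}, in the sense that for each~$i$ the cluster variable exchanging with~$y_i$ again lies in~$\X$.

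The formulation I would actually push through, however, is multiplicative. Call two cluster variables~$x,y\in\X$ \emph{compatible} if their product~$xy$ is a cluster monomial; then clusters are precisely the maximal compatible subsets, and these have cardinality~$n$. The point is that compatibility is detectable from~$\X$ alone: by positivity together with linear independence of cluster monomials (Theorem~\ref{theo::recollectionsCluster}) and the irreducibility of cluster variables, $xy$ is a cluster monomial if and only if it is \emph{indecomposable} in the positive cone, i.e.\ it cannot be written as a sum, with non-negative integer coefficients, of two nonzero theta-basis elements; and when~$x,y$ are incompatible, the structure constants of the theta basis must produce exactly such a nontrivial decomposition — this is the role played by the skein relations in the surface case. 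Granting this, the compatibility relation depends only on the subring generated by~$\X$ and on~$\X$, hence is shared by the~$Q$- and~$R$-structures, so their maximal compatible subsets coincide, which is condition~(U').

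The main obstacle is precisely the step that the bracelet basis and skein relations discharge for surfaces: establishing, for an arbitrary connected cluster algebra, that products of cluster variables have a controlled, \emph{seed-independent} expansion in a positive basis that both (i) singles out cluster monomials as the indecomposable positive elements sitting at the $\mathbf{g}$-vectors of the cluster fan and (ii) decomposes every product of two incompatible cluster variables nontrivially. This is where the elementary topological input of the surface setting is unavailable and one must instead invoke the full scattering-diagram construction of~\cite{GHKK18} — the theta basis, its structure constants, and the $\mathbf{g}$-vector parametrization of cluster monomials. Once this multiplicative control is in hand, the remaining steps are formal: the intrinsic compatibility relation recovers the clusters, and then Proposition~\ref{prop::clustersAreSufficient} together with Remark~\ref{rema::quivers or opposite} recovers the seeds and the exchange graph, completing the proof of unistructurality in full generality.
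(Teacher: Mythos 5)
You should know at the outset that the paper does \emph{not} prove this statement: it is stated as Conjecture~\ref{conj::unistructurality} and remains open within the paper, which establishes only the unpunctured-surface case (Theorem~\ref{theo::main}); the general case was proved afterwards by Cao and Li \cite{CL18}, by quite different methods (their argument goes through $d$-vectors/compatibility and the ``enough $g$-pairs'' property, not through indecomposability in a positive basis). Measured against that, your text is not a proof but a program, and you have in fact located its pivotal step yourself: the claim that in any cluster algebra the product $xy$ of two cluster variables is a cluster monomial if and only if it is ``indecomposable'' in the positive cone of the theta basis, with incompatible pairs always decomposing nontrivially. This is precisely the work that the bracelet basis (Theorem~\ref{positivebasis}), the skein relations (Theorem~\ref{smoothing}), and Lemma~\ref{noclosedloops} do in the surface case, and no general analogue of it follows from \cite{GHKK18}. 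Concretely: for an arbitrary skew-symmetric cluster algebra the theta functions form a basis of the \emph{middle} cluster algebra, which need not coincide with $\A(\bu,Q)$ (nor with its upper bound); products of theta functions can have infinite expansions outside the cases where the full Fock--Goncharov conjecture is known; and positivity of structure constants, where available, gives $\theta_{g_1}\theta_{g_2}=\theta_{g_1+g_2}+(\text{non-negative terms})$ but does not show the extra terms are nonzero when $g_1,g_2$ fail to lie in a common cone of the cluster fan. Asserting that ``the structure constants must produce exactly such a nontrivial decomposition'' is assuming the theorem you need.

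There is also a circularity your formulation does not escape. ``Indecomposable in the positive cone'' is defined via the theta basis, which is constructed from a scattering diagram attached to a seed; its seed-independence is known only across \emph{mutation-equivalent} seeds. But $(\bv,R)$ is precisely not assumed mutation-equivalent to $(\bu,Q)$ --- that is what unistructurality asserts --- so you cannot take for granted that the two structures induce the same basis, the same positive cone, or the same notion of indecomposability; your ``compatibility is detectable from $\X$ alone'' is therefore unjustified as stated. Note how the surface proof sidesteps exactly this: it never claims the bracelet basis is intrinsic, but instead expands $v_1v_2$ in the basis attached to $(\bu,Q)$ and then exploits Laurent positivity \emph{in the cluster $\bv$ of the other structure}, together with \cite[Lemma 3.7]{CKLP13} and linear independence of cluster monomials, to force the expansion to collapse to a single cluster monomial. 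Finally, even granting your multiplicative characterization of compatibility, the step ``clusters are precisely the maximal pairwise-compatible subsets'' is a theorem requiring proof in general (in the surface case it is geometrically evident; in general it is part of what \cite{CL18} establishes), not a formal consequence of the definitions. So the formal frame of your argument (Proposition~\ref{prop::clustersAreSufficient}, Proposition~\ref{prop::connected}, Remark~\ref{rema::quivers or opposite}, and the \cite{GSV08} reconstruction of the exchange graph) is correctly assembled and matches the paper's strategy, but the analytic core is missing, and it is the part that cannot currently be discharged by citation.
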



\subsection{Reduction to connected quivers}

In this section, we show that it is enough to consider connected quivers when dealing with unistructurality.  More precisely, we prove the following proposition.

\begin{proposition}\label{prop::connected}
    Let~$Q$ be a quiver which is a disjoint union of quivers~$Q^1, \ldots, Q^r$.  Let~$(\bu, Q)$ be a seed, and let~$(\bu^1, Q^1), \ldots, (\bu^r, Q^r)$ be the seeds corresponding to the decomposition of~$Q$.  Then the cluster algebra~$\A(\bu, Q)$ is unistructural if and only if the cluster algebra~$\A(\bu^i, Q^i)$ is unistructural for all~$i\in \{1, \ldots, r\}$.
\end{proposition}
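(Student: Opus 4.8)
The plan is to prove both implications by exploiting the fact that a disjoint union of quivers produces a cluster algebra which behaves like a ``product'' of the pieces. First I would record two structural facts. Write $\A^i = \A(\bu^i,Q^i)$, let $\X^i$ be its set of cluster variables, and let $\F^i = \Q(\{x_j : j\in Q^i_0\})$ be the corresponding subfield of $\F$. (a) Since the $\F^i$ are generated by disjoint sets of algebraically independent variables, $\F^i\cap\F^{i'}=\Q$ for $i\neq i'$; as each cluster variable lies in a cluster (a free generating set) it is transcendental over $\Q$, so the union $\X=\bigsqcup_i\X^i$ is disjoint and $\X^i=\X\cap\F^i$. (b) Because a mutation at a vertex of $Q^i$ alters only the variable attached to that vertex and uses only the arrows of $Q^i$, mutations in different components commute and act independently; hence the clusters of $\A(\bu,Q)$ are exactly the sets $\bw^1\sqcup\cdots\sqcup\bw^r$ with $\bw^i$ a cluster of $\A^i$, and the same holds for any seed whose quiver is a disjoint union.

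For the implication ``$\A(\bu,Q)$ unistructural $\Rightarrow$ each $\A^i$ unistructural'' I would use a gluing trick. Fix $i$ and let $(\bv^i,R^i)$ be any seed of $\F^i$ whose cluster variables are $\X^i$. Glue it to the untouched components to form the seed $\big(\bv^i\sqcup\bigsqcup_{i'\neq i}\bu^{i'},\ R^i\sqcup\bigsqcup_{i'\neq i}Q^{i'}\big)$ of $\F$; by (a) its set of cluster variables is again $\X$, so condition (U') for $\A(\bu,Q)$ (which is equivalent to unistructurality by Proposition~\ref{prop::clustersAreSufficient}) gives that it has the same clusters as $\A(\bu,Q)$. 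Intersecting every cluster on both sides with $\X^i$ and invoking (b) shows that $\A(\bv^i,R^i)$ and $\A^i$ have the same clusters, which is (U') for $\A^i$.

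For the converse I would assume every $\A^i$ is unistructural and take a seed $(\bv,R)$ with cluster-variable set $\X$, aiming to show it has the clusters of $\A(\bu,Q)$. Each $v_k$ lies in a unique $\X^{c(k)}$, and the crux is the \emph{arrow claim}: if $c(k)\neq c(l)$ then there is no arrow between $k$ and $l$ in $R$. Granting it, every connected component of $R$ has all of its vertices in a single $\X^i$, so grouping the components of $R$ by that value of $i$ yields a disjoint-union decomposition $R=\bigsqcup_iR^{(i)}$ with vertex variables $\bv^{(i)}\subseteq\X^i$. A transcendence-degree count (the $\bv^{(i)}$ are algebraically independent, lie in $\F^i$, and together number $n=\sum_in_i$) forces $|\bv^{(i)}|=n_i=|Q^i_0|$, and algebraic disjointness of the $\F^i$ gives $\Q(\bv^{(i)})=\F^i$; thus $(\bv^{(i)},R^{(i)})$ is a genuine seed of $\F^i$ whose cluster variables are exactly $\X^i$ by (a) and (b). Unistructurality of $\A^i$ then equates the clusters of $\A(\bv^{(i)},R^{(i)})$ with those of $\A^i$, and reassembling over $i$ via (b) equates the clusters of $\A(\bv,R)$ with those of $\A(\bu,Q)$.

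The arrow claim is the main obstacle, and I would prove it by combining linear disjointness of the $\F^i$ with linear independence of cluster monomials. Suppose there is an arrow between $k$ and $l$ with $a:=c(k)\neq c(l)=:b$; mutating at $l$ (say the arrow is $k\to l$) gives an exchange relation $v_lv_l'=M_1+M_2$ in which $v_k$ divides exactly one monomial and $v_l$ divides neither. View $\F$ as the fraction field of $\F^b\otimes_\Q\F^{b'}$, where $\F^{b'}$ is generated by the variables of the components other than $b$; by algebraic disjointness each $v_m$ lies purely in one tensor factor, and the cluster monomials in the $v_m$ that lie in $\F^b$ are $\Q$-linearly independent. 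The new variable $v_l'$ lies in some $\F^c$. If $c\neq b$, splitting the exchange relation across $\F^b\otimes\F^{b'}$ and reading off the coefficient of the basis vector $v_l$ (which occurs on the left with coefficient $v_l'$ but divides neither $M_1$ nor $M_2$) forces $v_l'=0$, a contradiction. If instead $c=b$, I would run the symmetric argument over $\F^a\otimes\F^{a'}$: now $v_l$ and $v_l'$ both lie in $\F^{a'}$ while $v_k$ contributes a nonconstant $\F^a$-factor to one monomial, and reading off the coefficient of that $\F^a$-cluster-monomial again yields $0=(\text{nonzero})$. Either way a contradiction results, so no such arrow exists.
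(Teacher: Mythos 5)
Your proof is correct, and its skeleton is the same as the paper's: one direction by gluing a rogue seed of one component onto the untouched others, the other by partitioning $\bv$ according to the subfields $\F^i$, proving that $R$ decomposes along this partition, and then applying unistructurality of each component. The only step where you genuinely diverge is the proof of the decomposition of $R$ (your ``arrow claim'', the paper's Lemma~\ref{lemm::disjointUnion}). The paper avoids your two-case tensor-product argument with a counting trick: every cluster of $\A(\bv,R)$ consists of cluster variables of $\A(\bu,Q)$, each lying in some $\F^i$, and since an algebraically independent subset of $\F^i$ has at most $n_i$ elements while the cluster has $n=\sum_i n_i$ elements, exactly $n_i$ of them lie in each $\F^i$; applying this to the mutated cluster $\mu_l(\bv)$, whose variables other than $v'_l$ already fill every quota except one slot in $\F^b$, forces $v'_l\in\F^b$. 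Then $v_lv'_l\in\Q(\bv^{(b)})$, and since the exchange sum $M_1+M_2$ is a polynomial in the free generating set $\bv$, every variable occurring in it must already belong to $\bv^{(b)}$ --- your arrow claim in one stroke, with no case analysis and no tensor decompositions. Your route is valid and buys independence from the counting argument, but one remark: where you invoke ``linear independence of cluster monomials'', all you actually need is the elementary fact that distinct monomials in the algebraically independent set $\bv$ are $\Q$-linearly independent (the elements $v_l$, $A_1$, $A_2$ are monomials in the single cluster $\bv$, with $A_1,A_2$ not involving $v_l$), so the deep theorem of Theorem~\ref{theo::recollectionsCluster} is not required and citing it obscures how little the argument depends on.
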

Before proving Proposition \ref{prop::connected}, let us state an immediate consequence.

\begin{crl}
Conjecture~\ref{conj::unistructurality} is true if and only if for any \emph{connected} quiver~$Q$ and any seed~$(\bu, Q)$, the cluster algebra~$\A(\bu, Q)$ is unistructural.
\end{crl}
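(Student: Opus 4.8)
The plan is to prove both implications by exploiting the fact that, because $Q$ has no arrows between its components, mutation at a vertex of $Q^i$ never involves the variables attached to the other components. Consequently the cluster variables of $\A(\bu,Q)$ split as a disjoint union $\X = \X^1 \sqcup \cdots \sqcup \X^r$, where $\X^i$ is the set of cluster variables of $\A(\bu^i,Q^i)$; writing $\F^i \subseteq \F$ for the subfield generated by $\bu^i$, each $\X^i$ is contained in $\F^i$, and since the $\bu^i$ are the disjoint parts of the free generating set $\bu$ we have $\F^i \cap \F^j = \Q$ for $i \neq j$. As no cluster variable is a constant, the sets $\X^i$ are pairwise disjoint and every cluster variable lies in exactly one $\F^i$. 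Moreover the clusters of $\A(\bu,Q)$ are precisely the sets $C^1 \cup \cdots \cup C^r$ with each $C^i$ a cluster of $\A(\bu^i,Q^i)$. Throughout I would use condition (U') of Proposition~\ref{prop::clustersAreSufficient}, so that only clusters, not exchange graphs, need to be compared.

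For the forward implication, assume $\A(\bu,Q)$ is unistructural and fix $i$; given any seed $(\bv^i,R^i)$ whose set of cluster variables is $\X^i$, I would assemble the seed $(\bv,R)$ with $R = Q^1 \sqcup \cdots \sqcup R^i \sqcup \cdots \sqcup Q^r$ and $\bv = \bu^1 \sqcup \cdots \sqcup \bv^i \sqcup \cdots \sqcup \bu^r$. This is a seed of $\F$ whose set of cluster variables is again $\X$, so unistructurality of $\A(\bu,Q)$ forces its clusters to coincide with those of $\A(\bu,Q)$. Intersecting a matching pair of clusters with the fixed set $\X^i$ (after freezing the initial cluster in every other component) then identifies the clusters of $\A(\bv^i,R^i)$ with those of $\A(\bu^i,Q^i)$, which is the desired unistructurality.

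The reverse implication is the substantial one, and the main obstacle is that an arbitrary seed $(\bv,R)$ with set of cluster variables $\X$ is not assumed to respect the decomposition: a priori $R$ could carry arrows between the groups of vertices $\bv^i := \bv \cap \X^i$. The key lemma I would establish is that in fact $R$ has no such arrows. To see this, fix a vertex $k$, say with $v_k \in \F^i$, and mutate the initial seed at $k$. Because quivers have no loops, neither monomial $M_{\mathrm{in}}$, $M_{\mathrm{out}}$ in the numerator of the exchange relation contains $v_k$, so the new variable $v'_k = (M_{\mathrm{in}} + M_{\mathrm{out}})/v_k$ depends nontrivially on $v_k$; hence $v'_k \notin \F^m$ for any $m \neq i$, and since $v'_k \in \X \subseteq \bigsqcup_j \F^j$ we must have $v'_k \in \F^i$. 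Then $v'_k$ is free of every variable lying outside $\F^i$, which (two monomials with positive coefficients cannot cancel) forces both $M_{\mathrm{in}}$ and $M_{\mathrm{out}}$ to be free of those variables as well; equivalently, $k$ has no arrow to or from any other component. As $k$ was arbitrary, $R = R|_1 \sqcup \cdots \sqcup R|_r$, where $R|_i$ is the full subquiver of $R$ on $\bv^i$.

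Once this decomposition is in hand, the proof concludes quickly. Each restricted pair $(\bv^i,R|_i)$ is itself a seed: the factorization of mutations shows its set of cluster variables equals $\X \cap \F^i = \X^i$, and since $\bu^i \subseteq \X^i$ these cluster variables are Laurent polynomials in $\bv^i$, so $\bv^i$ freely generates $\F^i$. As $\X^i$ is the set of cluster variables of $\A(\bu^i,Q^i)$, unistructurality of the components identifies the clusters of $\A(\bv^i,R|_i)$ with those of $\A(\bu^i,Q^i)$. Finally, because $R$ is decomposed, the clusters of $\A(\bv,R)$ are exactly the unions $D^1 \cup \cdots \cup D^r$ of clusters $D^i$ of the $\A(\bv^i,R|_i)$, and these coincide with the clusters $C^1 \cup \cdots \cup C^r$ of $\A(\bu,Q)$; this is condition (U') for $\A(\bu,Q)$.
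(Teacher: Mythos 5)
Your overall architecture matches the paper's (which proves this as Proposition~\ref{prop::connected} via two lemmas): the forward direction by assembling a seed from a component witness, the reverse direction by showing that any seed $(\bv,R)$ with cluster-variable set $\X$ must split along the parts $\bv^i=\bv\cap\F^i$, and then invoking component unistructurality. However, your proof of the key lemma has a genuine gap at the step ``$v'_k$ depends nontrivially on $v_k$; hence $v'_k\notin\F^m$ for any $m\neq i$'' (and the same issue recurs when you force $M_{\mathrm{in}}$ and $M_{\mathrm{out}}$ to avoid variables outside $\F^i$). ``Depending nontrivially on $v_k$'' is a statement in the coordinates $\bv$, namely $v'_k\notin\Q(\bv\setminus\{v_k\})$, whereas membership in $\F^m$ is defined via the coordinates $\bu$. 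The inference you want requires $\F^m\subseteq\Q(\bv\setminus\{v_k\})$, or at least that $\F^m$ is algebraic over a subfield of it; at that point of your argument nothing rules out that the generators $\bu^m$ of $\F^m$, rewritten as rational functions in $\bv$, themselves involve $v_k$ --- indeed you have not yet shown that $\bv\cap\F^m$ has the right cardinality, or even that it is nonempty.

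What is missing is exactly the step the paper performs first in Lemma~\ref{lemm::disjointUnion}: the counting argument (each $\bv^j=\bv\cap\F^j$ is an algebraically independent subset of $\F^j$, so $|\bv^j|\le|\bu^j|$, and summing over $j$ forces equality), which makes each $\bv^j$ a transcendence basis of $\F^j$, followed by the identification $\Q(\bv^j)=\Q(\bu^j)$ (standard field theory, using that $\bv$ freely generates the ambient field). Once $\F^m=\Q(\bv^m)\subseteq\Q(\bv\setminus\{v_k\})$ is available, both of your inferences become valid and your exchange-relation argument coincides with the second half of the paper's proof. Note in particular that your later justification that ``$\bv^i$ freely generates $\F^i$'' (via the Laurent phenomenon) appears \emph{after} the key lemma, while this fact --- at least in its transcendence-basis form --- is needed \emph{inside} the lemma's proof; the logical order must be reversed. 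The remainder of your argument (the assembly in the forward direction, and the matching of clusters once $R$ decomposes) is correct.
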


Let us now prove Proposition~\ref{prop::connected}.  Assume that~$Q$ is a quiver which is a disjoint union of quivers~$Q^1, \ldots, Q^r$, and let~$(\bu, Q),(\bu^1, Q^1), \ldots, (\bu^r, Q^r)$ be seeds as above.
One implication of the proposition is clear.

\begin{lemma}
    If the cluster algebra~$\A(\bu, Q)$ is unistructural, then so is~$\A(\bu^i, Q^i)$ for each~$i\in \{1, \ldots, r\}$.
\end{lemma}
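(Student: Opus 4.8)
The plan is to deduce unistructurality of each factor $\A(\bu^i, Q^i)$ from that of the whole algebra by exploiting the fact that, when $Q$ is a disjoint union, the cluster combinatorics of $\A(\bu, Q)$ is the ``product'' of those of its components. First I would record the following structural observation: since $Q$ has no arrows between the blocks $Q^1, \ldots, Q^r$, a mutation at a vertex of $Q^j$ alters only the quiver $Q^j$ and the cluster variable indexed by that vertex (the exchange relation and the three steps of quiver mutation involve only arrows incident to that vertex, all internal to its block). Hence mutations in distinct blocks commute and act independently, so every cluster of $\A(\bu, Q)$ has the form $C^1 \sqcup \cdots \sqcup C^r$ with each $C^j$ a cluster of $\A(\bu^j, Q^j)$, and the set of cluster variables decomposes as $\X = \X^1 \sqcup \cdots \sqcup \X^r$, where $\X^j$ denotes the set of cluster variables of $\A(\bu^j, Q^j)$.

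Next I would fix $i$ and take an arbitrary seed $(\bv, R)$, with $\bv$ a free generating set of the subfield $\F_i = \Q(u_k : k \in Q^i_0)$, whose cluster algebra $\A(\bv, R)$ has cluster variable set equal to $\X^i$; by Proposition~\ref{prop::clustersAreSufficient} it suffices to show that $\A(\bv, R)$ and $\A(\bu^i, Q^i)$ have the same clusters. To this end I would assemble the ``mixed'' seed $(\bw, S)$ obtained from $(\bu, Q)$ by replacing its $i$-th block $(\bu^i, Q^i)$ with $(\bv, R)$ while keeping the blocks $(\bu^j, Q^j)$ for $j \neq i$. Since $\bv$ freely generates $\F_i$, the tuple $\bw$ freely generates $\F$, so $(\bw, S)$ is a genuine seed and $S$ is again a disjoint union over the same blocks. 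Applying the structural observation to $(\bw, S)$, its cluster variables are the union of those of $\A(\bv, R)$ (namely $\X^i$) with those of the unchanged blocks ($\X^j$ for $j \neq i$), which is exactly $\X$. Unistructurality of $\A(\bu, Q)$ then applies to $(\bw, S)$ and yields that $\A(\bu, Q)$ and $\A(\bw, S)$ have the same set of clusters.

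Finally I would recover the clusters of the $i$-th factor from this equality. The crucial point is that the decomposition $\X = \bigsqcup_j \X^j$ is \emph{intrinsic}: a cluster variable lies in $\X^j$ if and only if it lies in the subfield $\F_j$, because a nonconstant element common to two distinct $\F_j$ would lie in $\F_j \cap \F_{j'} = \Q$. Thus the same partition of $\X$ is obtained whether one starts from $(\bu, Q)$ or from $(\bw, S)$, and intersecting a cluster $C^1 \sqcup \cdots \sqcup C^r$ with $\X^i$ returns its $i$-th block. Doing this to every cluster recovers, from $\A(\bu, Q)$, all clusters of $\A(\bu^i, Q^i)$, and from $\A(\bw, S)$, all clusters of $\A(\bv, R)$; since the two full algebras share the same clusters, these two families coincide, establishing condition~(U') for $\A(\bu^i, Q^i)$. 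The main obstacle, and the step deserving the most care, is precisely the combination of the product structure of the disjoint union with the intrinsic nature of the partition of $\X$: it is this intrinsicness that allows the equality of clusters for the full algebra to descend to the single factor, independently of which seed ($\bu$ or $\bw$) is used to present it.
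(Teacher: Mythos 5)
Your proof is correct and rests on the same key construction as the paper's: the hybrid seed obtained by replacing the $i$-th block of~$(\bu, Q)$ with the alternative seed~$(\bv, R)$ while keeping the other blocks. The only difference is presentational — the paper argues by contraposition (a seed violating condition (U) for a factor yields, via the same hybrid construction, a seed violating it for the whole algebra), whereas you run the argument directly and make explicit the steps the paper leaves implicit, namely the block decomposition of clusters of a disjoint-union seed and the fact that the partition of the cluster-variable set is intrinsic because distinct blocks generate subfields meeting only in~$\Q$.
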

\begin{proof}
    Suppose that~$\A(\bu^i, Q^i)$ is not unistructural for a certain~$i$.  Let $(\bv^i, R^i)$ be a seed such that~$\A(\bv^i, R^i)$ has the same cluster variables as~$\A(\bu^i, Q^i)$, but such that~$(\bv^i, R^i)$ has condition (U) in Definition \ref{defi::unistructurality} fail.
    
    Let~$\bu' = \bu^1 \sqcup \ldots \sqcup \bu^{i-1} \sqcup \bv^i \sqcup \bu^{i+1} \sqcup \ldots \sqcup \bu^r$ and~$Q' = Q^1 \sqcup \ldots \sqcup Q^{i-1} \sqcup R^i \sqcup Q^{i+1} \sqcup \ldots \sqcup Q^r$.  Then~$\A(\bu, Q)$ and~$\A(\bu', Q')$ have the same cluster variables, but condition (U) fails.  Thus~$\A(\bu, Q)$ is not unistructural.
\end{proof}

Assume now that each~$\A(\bu^i, Q^i)$ is unistructural.  
Note that each cluster variable of~$\A(\bu, Q)$ lies in one of the fields~$\Q(\bv^i)$, for some~$i\in \{1, \ldots, r\}$.
Let~$(\bv, R)$ be a seed such that~$\A(\bv, R)$ and~$\A(\bu, Q)$ have the same cluster variables.

\begin{lemma}\label{lemm::disjointUnion}
    The quiver~$R$ is a disjoint union of quivers~$R^1, \ldots, R^r$ and~$\bv$ is a corresponding disjoint union of tuples~$\bv^1, \ldots, \bv^r$ in such a way that each cluster algebra~$\A(\bv^i, R^i)$ is contained in~$\Q(\bu^i)$.
\end{lemma}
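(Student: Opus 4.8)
The plan is to partition the vertices of $R$ according to which of the fields $\Q(\bu^1), \ldots, \Q(\bu^r)$ contains the corresponding variable of $\bv$, then to show that $R$ has no arrow joining two different parts of the partition. Once this is established, the decomposition $\bv = \bv^1 \sqcup \cdots \sqcup \bv^r$ and the containments $\A(\bv^i, R^i) \subseteq \Q(\bu^i)$ follow formally.

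First I would record the block structure coming from $Q$. Because $Q$ is the disjoint union of the $Q^i$, any mutation of $(\bu, Q)$ at a vertex of $Q^i$ alters only the variable sitting at that vertex, through an exchange relation involving only variables indexed by $Q^i$; hence every cluster variable of $\A(\bu, Q)$ is in fact a cluster variable of a single $\A(\bu^i, Q^i)$ and so lies in $\Q(\bu^i)$. Since distinct blocks use disjoint sets of variables, $\Q(\bu^i) \cap \Q(\bu^{i'}) = \Q$ for $i \neq i'$, and as cluster variables are non-constant, each lies in exactly one $\Q(\bu^i)$. Now $\A(\bv, R)$ has the same set of cluster variables $\X$; applied to the initial cluster $\bv$, this assigns to each $v_k$ a well-defined block $i(k)$, and I let $\bv^i$ collect the variables of block $i$. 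Writing $n_i := \abs{Q^i_0}$, each $\bv^i$ is an algebraically independent subset of $\Q(\bu^i)$, a field of transcendence degree $n_i$ over $\Q$, so $\abs{\bv^i} \le n_i$; since $\bv$ is a transcendence basis of $\Q(x_1, \ldots, x_n)$ we have $\sum_i \abs{\bv^i} = n = \sum_i n_i$, forcing $\abs{\bv^i} = n_i$ and hence that each $\bv^i$ is a free generating set of $\Q(\bu^i)$.

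The heart of the argument is that $R$ has no arrow joining a vertex of block $i$ to a vertex of block $j$ with $i \neq j$. Suppose such an arrow exists, incident to $k$ with $i(k) = i$ and to a neighbour $k'$ with $i(k') = j$. Mutating $(\bv, R)$ at $k$ produces a cluster variable $v'_k \in \X$, so $v'_k \in \Q(\bu^\ell)$ for a single block $\ell$, and the exchange relation reads $v'_k v_k = M_+ + M_-$, where $M_+$ and $M_-$ are the two products of neighbouring variables. As $R$ has no $2$-cycles, $M_+$ and $M_-$ share no common variable, so they are coprime monomials in $\bv$; in particular $v_{k'}$ divides exactly one of them. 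I would then exploit that, for $a \neq b$, the fields $\Q(\bu^a)$ and $\Q(\bu^b)$ are linearly disjoint over $\Q$, so that multiplication induces an embedding $\Q(\bu^a) \otimes_{\Q} \Q(\bu^b) \hookrightarrow \Q(x_1, \ldots, x_n)$. Grouping the variables occurring in $M_+, M_-, v_k, v'_k$ by their blocks, one rewrites the exchange relation as an identity of tensors in which the right-hand side $v'_k v_k$ is a pure tensor while the left-hand side is a sum of two pure tensors $A_+ \otimes B_+ + A_- \otimes B_-$. Such a sum is again a pure tensor only if $A_+, A_-$ are proportional over $\Q$ or $B_+, B_-$ are; but these are coprime monomials, which are proportional only when both are trivial, i.e. only when $k$ has no neighbour in the corresponding block. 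According to the position of $\ell$ relative to $\{i, j\}$, this contradicts the presence of the block-$i$ factor $v_k$ or of the block-$j$ neighbour $k'$, which completes the contradiction.

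Granting the no-cross-arrows step, $R$ is the disjoint union of the full subquivers $R^i$ on the block-$i$ vertices, $\bv$ decomposes correspondingly as $\bv^1 \sqcup \cdots \sqcup \bv^r$, and every cluster variable of $\A(\bv^i, R^i)$ is a Laurent polynomial in $\bv^i \subseteq \Q(\bu^i)$, whence $\A(\bv^i, R^i) \subseteq \Q(\bu^i)$, as required. I expect the no-cross-arrows step to be the main obstacle, and within it the delicate case where $k$ also has neighbours inside its own block $i$: then block $i$ may cancel out of $v'_k$, so a naive count of the blocks on which $v'_k$ depends is inconclusive, and one genuinely needs the linear-disjointness together with the coprimality of $M_+$ and $M_-$ to force the contradiction.
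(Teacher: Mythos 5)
Your proof is correct, and its skeleton is the same as the paper's: partition the vertices of~$R$ according to which field~$\Q(\bu^i)$ contains the corresponding variable of~$\bv$ (using that every element of~$\X$ lies in exactly one such field), use transcendence-degree counting to force~$\abs{\bv^i} = \abs{\bu^i}$, and then use the exchange relations to rule out arrows between blocks. The genuine difference is in how the last step is executed. You leave the block~$\ell$ of the mutated variable~$v'_k$ undetermined, and compensate with linear disjointness of the fields~$\Q(\bu^a)$, a tensor-rank argument (a sum of two pure tensors is pure only if one pair of components is~$\Q$-proportional, and proportional coprime monomials are trivial), and a case analysis on~$\ell$; all of this can indeed be made rigorous along the lines you sketch. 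The paper short-circuits this machinery with one observation: the adjacent cluster~$\mu_k(\bv)$ also consists of elements of~$\X$, so the same counting argument applied to it forces~$v'_k$ into the \emph{same} block as~$v_k$. Hence~$v_k v'_k \in \Q(\bu^i) = \Q(\bv^i)$, and since the right-hand side of the exchange relation is a polynomial in the algebraically independent set~$\bv$ whose two monomials cannot cancel, every variable occurring in it must already lie in~$\bv^i$ --- the no-cross-arrow conclusion is then immediate, with no tensor products needed. So the paper's device buys a one-line finish where you need heavier field theory, while your route has the mild advantage of never invoking the equality~$\Q(\bv^i) = \Q(\bu^i)$ (which, note, both you and the paper assert somewhat tersely: it requires that~$\Q(\bv^i)$ be algebraically closed in the purely transcendental extension~$\Q(\bv)$); in any case the lemma's stated conclusion only needs~$\A(\bv^i,R^i) \subseteq \Q(\bv^i) \subseteq \Q(\bu^i)$, which both arguments deliver.
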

\begin{proof}
    By assumption, each element of~$\bv$ is contained in one of the subfields~$\Q(\bu^i)$ of~$\Q(\bu)$.  Since $\bv$ is a free generating set of~$\Q(\bu)$, all subsets of~$\bv$ are algebraically free.  Therefore, for each~$i$, there is exactly~$|\bu^i|$ elements of~$\bv$ in~$\Q(\bu^i)$.
    This allows us to partition~$\bv$ into sub-tuples~$\bv^1, \ldots, \bv^r$ in such a way that~$\bv^i$ contains precisely the variables of~$\bv$ contained in~$\Q(\bu^i)$.
    Since~$\bv$ freely generates~$\Q(\bu)$, this implies that each~$\bv^i$ freely generates~$\Q(\bu^i)$; in other words,~$\Q(\bu^i)= \Q(\bv^i)$ for each~$i$.

    Now, let~$v_j$ a cluster variable of~$\bv$, and let~$v'_j$ be the variable obtained by mutation of~$(\bv, R)$ at~$j$.  By the above argument, if~$v_j$ is in~$\Q(\bu^i)$, then so is~$v'_j$.
    Write the exchange relation as
    \[
      v_jv'_j = \prod_{\stackrel{\alpha \in R_1}{t(\alpha) = j}} v_{s(\alpha)} + \prod_{\stackrel{\alpha \in R_1}{s(\alpha) = j}} v_{t(\alpha)}.
    \]
    Since~$v_jv'_j$ lies in~$\Q(\bu^i) = \Q(\bv^i)$ and each~$v_k$ lies in~$\Q(\bv)$, we must have that all variables occurring on the right hand side of the exchange relation are also in~$\Q(\bv^i)$.  Thus the vertex~$j$ of~$R$ is related by arrows only to vertices corresponding to variables in~$\bv^i$.
    Repeating the argument for all vertices~$j$ of~$R$, we partition~$R$ into a disjoint union of quivers~$R^1, \ldots, R^r$ as required.
\end{proof}

As a consequence of Lemma~\ref{lemm::disjointUnion}, the set of cluster variables of~$\A(\bv^i, R^i)$ is the same as that of~$\A(\bu^i, Q^i)$.  Since the latter is unistructural by hypothesis, we get that the two algebras have the same clusters.  As~$R$ is the disjoint union of the~$R^i$ and~$\bv$ that of the~$\bv^i$, we thus get that~$\A(\bu, Q)$ and~$\A(\bv, R)$ also have the same clusters.  Thus~$\A(\bu, Q)$ is unistructural.  This finishes the proof of Proposition~\ref{prop::connected}.


\section{Cluster algebras arising from triangulations of surfaces}\label{sect::surfaces}


\subsection{Cluster algebras and surfaces}

We now recall how a cluster algebra can be associated to a triangulation of a surface.  We mainly follow \cite{FST08}, but restrict ourselves to the setting of unpunctured surfaces.

A \emph{marked surface} is a pair~$(S,M)$, where~$S$ is a connected oriented Riemann surface with boundary~$\partial S$ and~$M$ is a finite subset of~$\partial S$ such that~$M$ has at least one point on each connected component of~$\partial S$.  We exclude the case where~$(S,M)$ is such that~$S$ is a disk and~$|M| \in \{1,2, 3\}$; these are the cases where there is either only one triangulation or no triangulation at all (in the sense that we recall below).

An \emph{arc} on~$(S,M)$ is an isotopy class of curves on~$S$ with endpoints in~$M$.  A \emph{boundary arc} is an arc isotopic to a curve contained in the boundary~$\partial S$ of~$S$; an \emph{internal arc} is an arc which is not a boundary arc.  Finally, a \emph{closed curve} on~$(S,M)$ is a free isotopy class of curves on~$S$ whose starting point and ending point are the same point in the interior of~$S$ (a \emph{free isotopy} is an isotopy of curves that does not necessarily fix their endpoints).

The figure below represents two triangulated surfaces: on the left, a sphere with two open disks removed (or annulus), and on the right, a sphere with three open disks removed.

\begin{figure}[h!]

    \begin{displaymath}
		\begin{tikzpicture}
		  \draw[thick] (0,0) circle (1);
		  \filldraw[grey] (0,0) circle (0.3);
		  \draw[thick] (0,0) circle (0.3);
		  
		  \filldraw[black] (0.3, 0) circle (0.05);
		  \filldraw[black] (-0.3, 0) circle (0.05);
		  \filldraw[black] (0, 1) circle (0.05);
		  \filldraw[black] (0, -1) circle (0.05);
		  
		  \draw (0.3, 0) -- (0,1);
		  \draw (-0.3, 0) -- (0,1);
		  \draw (0.3, 0) -- (0,-1);
		  \draw (-0.3, 0) -- (0,-1);
		  
		  \draw[thick] (5,0) ellipse (2cm and 1cm);
		  \filldraw[grey] (4,0) circle (0.3);
		  \draw[thick] (4,0) circle (0.3);
		  \filldraw[grey] (6,0) circle (0.3);
		  \draw[thick] (6,0) circle (0.3);
		  
		  \filldraw[black] (4.3, 0) circle (0.05);
		  \filldraw[black] (3.7, 0) circle (0.05);
		  \filldraw[black] (6.3, 0) circle (0.05);
		  \filldraw[black] (5.7, 0) circle (0.05);
		  \filldraw[black] (5,1) circle (0.05);
		  \filldraw[black] (5,-1) circle (0.05);
		  
		  \draw (5,1) -- (5,-1);
		  \draw (5,1) -- (5.7, 0);
		  \draw (5,1) -- (4.3, 0);
		  \draw (5,-1) -- (5.7, 0);
		  \draw (5,-1) -- (4.3, 0);
		  \draw (5,-1) to [bend left=40] (3.7,0);
		  \draw (5,-1) to [bend right=40] (6.3,0);
		  \draw (5,1) to [bend right=40] (3.7,0);
		  \draw (5,1) to [bend left=40] (6.3,0);

		\end{tikzpicture}
	\end{displaymath}  
\end{figure}

We say that two arcs or closed curves~$\gamma$ and~$\delta$ on~$(S,M)$ \emph{intersect} if, for all choices of isotopy representatives~$\bar \gamma$ of~$\gamma$ and~$\bar \delta$ of~$\delta$, the two representatives~$\bar\gamma$ and~$\bar\delta$ intersect in the interior of~$S$.  We say that an arc or closed curve~\emph{self-intersects} if all isotopy representatives of it intersects itself in the interior of~$S$.  A \emph{triangulation} of~$(S,M)$ is a maximal collection of non-self-intersecting and pairwise non-intersecting curves.

For a triangulation~$\tau$ and an arc~$i$ in~$\tau$, the \emph{flip of~$\tau$ at~$i$} is the unique triangulation~$\mathfrak{f}_i(\tau)$ containing~$\tau\setminus i$ but not~$i$.

Let~$\tau$ be a triangulation of~$(S,M)$.  Define a quiver~$Q(\tau)$ in the following way:
\begin{itemize}
    \item vertices of~$Q(\tau)$ are arcs in~$\tau$;
    \item there is an arrow $i\to j$ for each triangle~$\Delta$ of~$\tau$ in which both~$i$ and~$j$ occur, with~$j$ immediately following~$i$ in the clockwise order of the boundary of~$\Delta$.
\end{itemize}

\begin{definition}
  Let~$\tau$ be a triangulation of~$(S,M)$ with~$n$ arcs, and let~$\bu$ be a free generating set of~$\Q(x_1, \ldots, x_n)$. The~\emph{cluster algebra associated with~$\tau$} (with set~$\bu$ of initial variables) is the cluster algebra~$\A(\bu, \tau) := \A(\bu, Q(\tau))$.  
\end{definition}

Let~$\tau$ be a triangulation of a marked surface~$(S,M)$.  The following are consequences of \cite[Theorem 7.11]{FST08}.

\begin{theorem}[Theorem 7.11 of \cite{FST08}]
$\phantom{x}$
\begin{itemize}
    \item  The cluster variables of~$\A(\bu, \tau)$ are in bijection with non-self-intersecting internal arcs of~$(S,M)$.  If~$\gamma$ is such an arc, denote by~$u_{\gamma}$ the corresponding cluster variable.  Under this bijection, the arcs of~$\tau$ are sent to the cluster variables in $\bu$.
    \item  The above induces a bijection between triangulations of~$(S,M)$ and seeds of~$\A(\bu, \tau)$:
    if~$\tau'$ is a triangulation of~$(S,M)$, then the associated seed is~$\big( (u_{\gamma} \ | \ \gamma\in \tau'), Q(\tau')  \big)$.
    \item In particular, if~$\tau'$ and~$\tau''$ are related by a flip at~$i$, then the seed associated to~$\tau''$ is the mutation at~$i$ of the seed associated to~$\tau'$.
\end{itemize}
\end{theorem}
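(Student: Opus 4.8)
The plan is to follow the strategy of Fomin--Shapiro--Thurston: realize the combinatorics of arcs geometrically, via lambda lengths on a decorated Teichm\"uller space, so that the Ptolemy relation governing a flip becomes the cluster exchange relation governing a mutation. I would split the argument into three stages --- a local combinatorial identity between flips and quiver mutations, a global matching of the two sides via connectivity of the flip graph, and a geometric realization that pins the cluster variables to arcs. The first stage is to prove the purely combinatorial statement that flipping~$\tau'$ at an internal arc~$i$ has the same effect as mutating~$Q(\tau')$ at the corresponding vertex, that is,~$Q(\mathfrak{f}_i(\tau')) = \mu_i(Q(\tau'))$. Since a flip alters only the two triangles sharing~$i$, and since on an unpunctured surface there are no self-folded triangles, the local picture is always an honest quadrilateral; the identity therefore reduces to a finite check. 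One draws the quadrilateral with diagonal~$i$, reads off from the clockwise rule the arrows of~$Q(\tau')$ incident to~$i$, flips the diagonal, and verifies that the new arrows agree with the three-step mutation rule (add compositions through~$i$, reverse the arrows at~$i$, cancel~$2$-cycles). The absence of punctures is precisely what keeps this local model clean and guarantees that every internal arc is flippable.

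Next I would check that the two sides match globally. Every triangulation of~$(S,M)$ has the same number~$n$ of internal arcs (an Euler-characteristic count), so~$Q(\tau')$ always has~$n$ vertices, matching the rank of~$\A(\bu, \tau)$; and the flip graph of~$(S,M)$ is connected, so any triangulation~$\tau'$ is reached from the initial~$\tau$ by a sequence of flips. Combined with the first stage, the assignment~$\tau' \mapsto \big((u_\gamma \mid \gamma \in \tau'),\, Q(\tau')\big)$ then carries each flip to the corresponding mutation and lands inside the mutation class of~$(\bu, \tau)$, so it does define a map from triangulations to seeds of~$\A(\bu, \tau)$.

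The part that genuinely needs geometry --- and which I expect to be the main obstacle --- is showing that this assignment is a bijection onto seeds, equivalently that arcs are in bijection with cluster variables, with no collisions and no extra cluster variables escaping the list of arcs. For this I would equip~$S$ with a decorated hyperbolic structure and attach to each arc~$\gamma$ its lambda length~$\lambda_\gamma$. Penner's coordinates give that, for any triangulation~$\tau'$, the system~$\{\lambda_\gamma \mid \gamma \in \tau'\}$ is algebraically independent, and the Ptolemy relation~$\lambda_i \lambda_{i'} = \lambda_a \lambda_c + \lambda_b \lambda_d$ across the flip quadrilateral is exactly the exchange relation prescribed by~$Q(\tau')$. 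Matching these relations identifies~$\gamma \mapsto \lambda_\gamma$ with the cluster structure: distinct arcs then yield distinct cluster variables, and conversely every cluster variable, being obtained by iterated exchange from~$\bu$, is realized as the lambda length of the arc produced by the corresponding iterated flips. Injectivity of~$\tau' \mapsto$ seed follows since a triangulation is recovered from the set of arcs occurring in it, and the flip/mutation compatibility from the first two stages yields the final statement about flips and mutations. The delicate points are the algebraic independence of lambda lengths and the verification that no cluster variable falls outside the family of arcs; it is here that the hyperbolic geometry, rather than bare combinatorics, carries the argument.
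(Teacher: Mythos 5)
This statement is quoted background: the paper gives no proof of it, citing \cite{FST08}, so there is no internal argument to compare yours against; your proposal can only be judged against the literature. On that footing, your outline is essentially sound, but it is not the route of the cited source: what you sketch is the lambda-length proof, which is the content of Fomin--Thurston's sequel (``Part II: Lambda lengths,'' building on Penner's decorated Teichm\"uller theory and on Gekhtman--Shapiro--Vainshtein), whereas the proof in \cite{FST08} itself is purely combinatorial --- flips agree with mutations of signed adjacency matrices, the flip graph is connected, and the bijection between arcs and cluster variables is established by a structural induction on the surface (cutting along arcs, with polygons, i.e.\ finite type $A$, as the base case), with no hyperbolic geometry anywhere. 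Your geometric route buys a transparent resolution of exactly the issue you identify as the crux: since $\lambda_\gamma$ is an honest function on (a slice of) decorated Teichm\"uller space, defined independently of any flip sequence, both path-independence of $\gamma \mapsto u_\gamma$ and injectivity follow once one knows that the evaluation homomorphism sending the initial variables to the lambda lengths of the arcs of $\tau$ is injective (Penner's independence theorem) and that distinct arcs have distinct lambda-length functions; the combinatorial route avoids analytic input and extends more readily to punctured surfaces, where tagged arcs and self-folded triangles break your clean quadrilateral picture. Three points in your sketch need flags. First, since the paper's cluster algebra has trivial coefficients, you must specialize all boundary lambda lengths to $1$ and check that the internal lambda lengths of a triangulation remain independent on that slice (they do, but it is not automatic from Penner's statement). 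Second, your stage two is circular as written: you cannot say the assignment $\tau' \mapsto \big((u_\gamma \mid \gamma \in \tau'),\, Q(\tau')\big)$ ``does define a map'' before stage three, because $u_\gamma$ is only defined via a chosen flip path until the geometric realization proves path-independence; the two stages must be intertwined, with the evaluation homomorphism set up first. Third, ``distinct arcs yield distinct cluster variables'' is asserted but needs its own argument (exhibiting a decorated hyperbolic structure separating the two lambda-length functions); it is exactly as delicate as the points you do flag, and without it the arc-to-variable map could collapse.
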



\subsection{Bracelets}

The following definitions are adapted from \cite{MSW13}.

\begin{definition}
    Let $\g$ be an essential loop in $(S,M)$. The \df{bracelet} $\Brac_m \g$ is the closed loop obtained by concatenating $\g$ with itself exactly $m$ times. We denote the polynomial in $\F$ associated to it by $b_m(\g)$.
\end{definition}

Note that $\Brac_m \g$ has $m-1$ self-intersections.


\begin{lemma}[Theorem 1.1 of \cite{MW13}]\label{LaurentBracelet}
     Every bracelet has a positive Laurent expansion with respect to any cluster of $\mathcal{A}(\bu,\tau).$
\end{lemma}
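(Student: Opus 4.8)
The plan is to prove that every bracelet $\Brac_m\g$ has a positive Laurent expansion in any cluster by reducing the claim to the positivity statements already available for cluster variables, together with the skein relations that govern products and resolutions of crossing curves. The essential point is that a bracelet, despite having self-intersections, should be expressible in terms of genuine (non-self-intersecting) arcs and closed curves whose expansions are controlled.

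First I would set up the expansion of $b_m(\g)$ with respect to a fixed triangulation $\tau$, so that by the Laurent phenomenon (Theorem~\ref{theo::recollectionsCluster}) we already know $b_m(\g)$ is \emph{some} Laurent polynomial with integer coefficients in the variables of $\bu$. The task is therefore only to establish \emph{non-negativity} of the coefficients. The natural tool is the skein relation from \cite{MSW13}: whenever a closed curve or arc self-intersects or crosses another, one may resolve the crossing into a $\Z_{\geq 0}$-linear combination of two collections of curves with one fewer crossing. Since $\Brac_m\g$ has exactly $m-1$ self-intersections (as noted after the definition of the bracelet), I would argue by induction on $m$, or more precisely on the number of self-intersections.

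The inductive step is where the work lies. Resolving one self-intersection of $\Brac_m\g$ via the skein relation should express $b_m(\g)$ as a non-negative integer combination of products of elements each of which is either a bracelet with strictly fewer self-intersections, a non-self-intersecting closed curve, or a non-self-intersecting arc; the latter two are elements of the bracelet basis of \cite{MSW13} and hence have positive Laurent expansions by construction, while the former falls under the induction hypothesis. Because the Laurent semiring $\Z_{\geq 0}[u_1^{\pm 1},\ldots,u_n^{\pm 1}]$ is closed under addition and multiplication, a product of positive Laurent polynomials is again positive, and a non-negative combination of positive Laurent polynomials is positive. Thus positivity propagates through the resolution.

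The main obstacle I anticipate is bookkeeping in the skein resolution: one must verify that resolving a self-intersection of a bracelet genuinely \emph{strictly decreases} the number of crossings in every term produced, so that the induction is well-founded, and that no term with uncontrolled (self-)intersections reappears. In other words, the delicate part is confirming that the skein relation, applied to the specific crossings of $\Brac_m\g$, yields only the base-case objects (bracelets of lower order, or basis elements with a positive expansion) rather than arcs or curves whose positivity is not yet known. Once this decrease is established, the result follows by induction, invoking positivity of cluster variables \cite{LS15, GHKK18} and the positivity of the bracelet basis elements as the base cases.
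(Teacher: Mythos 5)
There is a genuine gap, and in fact the paper does not prove this statement at all: it is quoted verbatim as Theorem~1.1 of \cite{MW13}, where it is established by exhibiting an explicitly positive combinatorial expansion (via band graphs and matrix formulae), not by a skein-relation induction. Your proposed induction fails at its key step. The skein relations, as stated in Theorem~\ref{smoothing} of this paper, read $x_C = \pm x_1 \pm x_2$; the signs are not all positive. Concretely, when you smooth the self-crossing of $\Brac_2\gamma$, one resolution gives two disjoint copies of $\gamma$, but the other resolution gives a \emph{contractible} closed loop, which in the conventions of \cite{MW13} evaluates to $-2$. The resulting identity is the Chebyshev-type recursion $b_2(\gamma) = b_1(\gamma)^2 - 2$, and more generally $b_m(\gamma) = b_1(\gamma)\,b_{m-1}(\gamma) - b_{m-2}(\gamma)$. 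So the resolution of a bracelet's self-intersections is emphatically \emph{not} a $\Z_{\geq 0}$-linear combination of lower-order terms: every inductive step introduces a subtraction, and showing that the positive part dominates these negative terms is exactly the hard content of the theorem. Positivity cannot "propagate through the resolution" as you claim.

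Your base case is also not available. For $m=1$ the bracelet is an essential loop without self-intersections, which is a closed curve and hence \emph{not} a cluster variable; the positivity results of \cite{LS15, GHKK18} apply only to cluster variables (arcs), and the positive Laurent expansion of essential loops is itself part of Theorem~1.1 of \cite{MW13}, not something that holds "by construction" of the bracelet basis. (Note also that positivity of the \emph{basis} in the sense of Theorem~\ref{positivebasis} means non-negative structure constants, due to Thurston \cite{Thu14}; that is a different statement from having a positive Laurent expansion, and conflating the two makes the argument circular.) The correct move here, and the one the paper makes, is simply to cite \cite{MW13}.
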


\begin{definition}
    A collection $c$ of arcs is \df{$\C$-compatible} if it contains no intersections (between two elements or self-intersections). Therefore, it excludes bracelets.
    
    A collection $c'$ of arcs and bracelets is \df{$\C'$-compatible} if:
    \begin{itemize}
        \item no two elements of $c'$ intersect each other, except for the self-intersections of a bracelet;
        \item there exists at least one essential loop $\g$ in $(S,M)$ such that the bracelet $\Brac_m(\g)$ lies in $c'$, with $m \geq 1$. Moreover, there is exactly one copy of $\Brac_m(\g)$ in $c'$ and if $m' \neq m$, then $\Brac_{m'}(\g) \not\in c'$.
    \end{itemize}
    \end{definition}
    
In other word, there are no bracelets in $\C$-compatible collections of arcs, but there are always bracelets in $\C'$-compatible collections of arcs and at most one per essential loop. 
Note that it does not completely agree with the definition if $\C$-compatibility from \cite{MSW13}: a collection of arcs is $\C$-compatible in \cite{MSW13} if it is $\C$-compatible or $\C'$-compatible.

We define $\C (S,M)$ to be the set of all $\C$-compatible collections of arcs in $(S,M)$ and we define $\C'(S,M)$ to be the set of all $\C'$-compatible collections of arcs and bracelet in $(S,M)$. collections.
We denote
\[ \B = \left\{ \prod_{\g \in C} x_\g \mid c \in \C (S,M)  \right\}
\text{ and }
\B' = \left\{ \prod_{\g \in C'} x_\g \mid c' \in \C' (S,M)  \right\}.\]

\begin{definition}
    Let $A$ be a $\Z$-algebra free as a $\Z$-module and $B$ be a basis of $A$.
    The basis $B$ is \df{positive}, if for every $b_1$, $b_2 \in B$,
    \[b_1 b_2 = \sum_{b \in B} m_b b \]
    with $m_b \geq 0$ for every $b \in B$.
\end{definition}

\begin{theorem}[\cite{MSW13,Thu14,CLS}]\label{positivebasis}
    The set $\B \cup \B'$ is a positive basis for the cluster algebra~$\A(\bu, \tau)$.
\end{theorem}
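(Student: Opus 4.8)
The statement bundles together two genuinely different claims: that $\B \cup \B'$ is a $\Z$-module basis of $\A(\bu,\tau)$, and that this basis is \emph{positive} in the sense of the preceding definition. My plan is to separate these, extracting the basis property from the combinatorial construction of \cite{MSW13} and the positivity from the skein-theoretic results of \cite{Thu14, CLS}. The first step is to check that every element of $\B \cup \B'$ genuinely lives in $\A(\bu,\tau)$: products of cluster variables attached to compatible arcs are cluster monomials, and each $b_m(\g)$ is a Laurent polynomial with non-negative coefficients by Lemma~\ref{LaurentBracelet}, so by the Laurent phenomenon (Theorem~\ref{theo::recollectionsCluster}) each indexed product is an honest element of the algebra.

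For the basis property I would argue independence and spanning separately. For \textbf{linear independence}, I would fix the cluster given by $\tau$ and expand each $\C$- or $\C'$-compatible collection as a Laurent polynomial in $\bu$; the idea is to order the collections by a compatibility/intersection statistic so that the change-of-basis matrix from $\B\cup\B'$ to the ordinary Laurent monomials is unitriangular, and hence invertible over $\Z$, forcing independence. For \textbf{spanning}, I would show that any product of arc- and loop-functions can be rewritten as a $\Z$-combination of elements of $\B\cup\B'$ by repeatedly applying the skein relations to smooth each transverse crossing of the underlying curves; this process terminates because each smoothing strictly decreases the total number of crossings, and its output is indexed by collections with no remaining crossings, i.e. exactly the $\C$- and $\C'$-compatible ones. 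Since cluster monomials already span $\A(\bu,\tau)$, spanning of $\B\cup\B'$ follows. Both halves are contained in \cite{MSW13}, so here I would mainly be recalling their argument.

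For \textbf{positivity}, the task is to show that for any $b_1,b_2\in \B\cup\B'$ the expansion $b_1 b_2=\sum_{b} m_b\, b$ has all $m_b\geq 0$. My plan is to invoke the geometric argument of \cite{Thu14, CLS}: equip $(S,M)$ with a hyperbolic structure and represent every arc and every $\Brac_m(\g)$ by a geodesic (multi)curve, so that $b_1 b_2$ corresponds to the superposition of two geodesic multicurves. Each skein relation resolves a transverse crossing into a sum of two non-crossing smoothings \emph{with positive sign}, and the content of \cite{Thu14, CLS} is that after fully resolving one never needs to subtract: the final expansion is a non-negative combination of $\C$- and $\C'$-compatible collections, with bracelets appearing exactly to absorb the repeated-loop and contractible components produced along the way.

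The hard part will be precisely this positivity. The basis statement is essentially bookkeeping — unitriangularity of leading terms for independence and crossing-decreasing skein resolutions for spanning — and is already established in \cite{MSW13}. By contrast, non-negativity of every structure constant $m_b$ is delicate, because a naive iterated smoothing can produce powers $x_\g^m$ of a loop-function that must be re-expressed in terms of the bracelets $b_{m'}(\g)$ via the Chebyshev-type relation between bangles and bracelets, and this re-expression threatens to introduce signs. The whole point of the \cite{Thu14, CLS} argument is to organize the smoothings (and the hyperbolic-geometry input) so that these rewritings stay positive. I would therefore not reprove positivity from scratch but quote it directly from those sources, and in the sequel use only the two extracted facts: that $\B\cup\B'$ is a basis of $\A(\bu,\tau)$, and that it is positive.
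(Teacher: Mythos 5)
Your proposal takes essentially the same approach as the paper: the paper does not prove this theorem at all, but simply splits it into the same two claims you identify and cites the literature for each (the basis property to \cite{MSW13,CLS}, positivity to \cite{Thu14}), which is exactly where you land after your sketch. Your summaries of what those references contain (unitriangular expansions and skein resolutions for the basis, the geometric/Chebyshev argument for positivity) are reasonable, and your final decision to quote rather than reprove matches the paper; the only discrepancy is a minor attribution swap, since the paper credits \cite{CLS} for the basis statement rather than for positivity.
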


It is proved in \cite[Theorem 1.1]{MSW13,CLS} that~$\B \cup \B'$ is a basis and in \cite[Theorem 1]{Thu14} that it is positive.


\subsection{Products of cluster variables and skein relations}

We recall here some definitions and results from \cite{MW13}. An \emph{multicurve} a multiset of arcs and closed curves on the surface.

\begin{definition} [Definition 6.1 of \cite{MW13}]
    Let $M$ be a multicurve such that we have one of the following two cases:
    \begin{itemize}
        \item $M = \{ \gamma_1, \gamma_2\}$ where $\gamma_1$ and $\gamma_2$ are arcs or closed curves intersecting at a point~$p$;
        \item $M = \{ \gamma \}$ where $\gamma$ has a self-intersection at a point $p$.
    \end{itemize}
    The \emph{smoothing} of $M$ at point $p$ is $s_p(M) = \{ M_1, M_2\}$ where $M_1$ (respectively $M_2$) is the same as $M$ except for the local change at $p$ that replaces the intersection or self-intersection point $\times$ with the pair of curves segments {\Large$~_\cap^{\cup}$} (or {$\supset \subset$}, respectively).
\end{definition}

The skein relations are an important theorem from \cite{MW13} and are recalled in the following statement.

\begin{theorem}[Skein relations, \cite{MW13}] \label{smoothing}
    Let $m$ be a multicurve with an intersection at point $p$ and let $s_p(m) = \{ M_1, M_2\}$. 
    Denote by $x_C$, $x_1$ and $x_2$ the product of the polynomials in $\F$ associated to the curves in $C$, $M_1$ and $M_2$, respectively. Then,
    \[ x_C = \pm x_1 \pm x_2. \]
\end{theorem}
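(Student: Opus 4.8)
The statement is an identity in the field $\F$, and since every arc or closed curve has, by the expansion formulae of Musiker--Schiffler--Williams, a well-defined Laurent polynomial attached to it in any fixed cluster, the plan is to verify it as an equality of Laurent polynomials in the initial variables $\bu$. The guiding principle is that smoothing is a purely local operation: the three multicurves $m$, $M_1$ and $M_2$ coincide outside an arbitrarily small disk $D$ around the crossing point $p$, so the relation should follow from a single local identity at $p$ transported along the portions of the curves lying outside $D$. The prototype of such a local identity is the Ptolemy relation: when two arcs cross once inside a quadrilateral, the product of the two diagonals equals the sum of the products of the two pairs of opposite sides, and these two pairs are exactly the two smoothings $M_1$ and $M_2$. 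This is nothing but the exchange relation, which already settles the theorem in the simplest configuration.

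To upgrade this local computation to arbitrary curves, I would use the matrix formula of \cite{MW13}. Fixing the triangulation $\tau$, record for a curve $\gamma$ the ordered sequence $\tau_{i_1}, \ldots, \tau_{i_d}$ of arcs of $\tau$ that it crosses, and to each crossing associate a $2\times 2$ matrix whose entries are monomials in $\bu$ determined by the triangle being traversed. The element $x_\gamma$ is then, up to division by the crossing monomial $\prod_j u_{i_j}$, a fixed entry of the ordered product of these matrices when $\gamma$ is an arc, and the trace of the product when $\gamma$ is a closed curve. The product $x_{\gamma_1}x_{\gamma_2}$ (or, in the self-intersecting case, a single trace read off the doubled segment) thereby becomes an expression in a product of matrices that can be manipulated in the matrix factors corresponding to the segments meeting at $p$.

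The heart of the argument is then a single $SL_2$-type identity. Near $p$ the two strands cross transversally, and resolving the crossing in the two possible ways amounts to inserting, between the blocks coming from the incoming and outgoing segments, either of the two reconnections of the strands. Concretely this is the trace identity $\operatorname{tr}(A)\operatorname{tr}(B) = \operatorname{tr}(AB) + \operatorname{tr}(AB^{-1})$ (respectively its matrix-entry analogue for arcs), in which the two terms on the right are precisely the matrix products associated to $M_1$ and $M_2$. The signs $\pm$ in the statement are exactly the signs produced by this identity, and they record the relative orientation of the two smoothed strands at $p$; checking that the normalizing crossing monomials agree on both sides is what converts this matrix identity into the asserted equality $x_C = \pm x_1 \pm x_2$ in $\F$.

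The step I expect to be the main obstacle is the global bookkeeping rather than the local identity itself. One must verify that the parts of each curve outside $D$ contribute identical matrix factors to all three terms, so that the local identity can be factored out; this is most delicate when the two crossing strands belong to a single closed curve, where the trace formula must be used and the two strands reappear in different positions of the same cyclic product. Keeping the signs coherent throughout this manipulation, and confirming that no spurious crossing monomials are introduced when a smoothing creates a contractible loop or an arc isotopic into the boundary, is where the real care is required.
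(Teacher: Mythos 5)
The paper does not actually prove Theorem~\ref{smoothing}: it is recalled verbatim from \cite{MW13}, with no argument given, so there is no internal proof to compare your attempt against. The honest benchmark is therefore the proof in \cite{MW13} itself, and your outline does reconstruct its method in spirit: arcs expanded as normalized entries of ordered products of $2\times 2$ matrices, closed curves as traces, and the crossing at $p$ resolved by a trace-type identity, with the signs coming from that identity. In that sense you have identified the right machinery.

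However, what you have written is a plan, not a proof, and the part you explicitly defer (``the global bookkeeping'') is not a routine verification but the bulk of the mathematical content of \cite{MW13}. Concretely: (i) the identity $\operatorname{tr}(A)\operatorname{tr}(B)=\operatorname{tr}(AB)+\operatorname{tr}(AB^{-1})$ is valid only for $\det B=1$, and the matrices produced by the expansion formulae are not unimodular --- their determinants are cluster monomials --- so one must work with the corrected identity $\operatorname{tr}(A)\operatorname{tr}(B)=\operatorname{tr}(AB)+\det(B)\operatorname{tr}(AB^{-1})$ and show that these determinant factors are exactly absorbed by the crossing monomials; this cancellation is where the $\pm$ signs are actually determined, and it requires a case analysis on how the crossing sits relative to the triangulation (inside a triangle, on an arc of $\tau$, near the boundary), none of which is carried out. (ii) A single identity does not suffice: the arc--arc, arc--loop, and loop--loop (or self-intersection) cases need different entry/trace identities, and the self-intersection case --- the one this paper genuinely relies on, since bracelets $\Brac_m\gamma$ have $m-1$ self-crossings and Lemmas~\ref{noclosedloops} and~\ref{termeva} rest on smoothing them --- is the one you treat most superficially. (iii) When a smoothing produces a contractible loop or a boundary-parallel curve, the formula only holds with explicit conventions for evaluating such degenerate curves, which you flag but do not resolve. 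So the approach is the correct one, but the proof as proposed is not complete; the deferred steps are precisely where the work lies.
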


\begin{lemma}\label{noclosedloops}
    Let $\gamma_1, ..., \gamma_r$ be arcs on the surface.  Then the smooth resolution of the multicurve $\{\gamma_1, ..., \gamma_r\}$ contains at least one multicurve with exactly $r$ arcs and no closed loops.
\end{lemma}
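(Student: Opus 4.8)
The plan is to prove the statement by induction on the total number of transverse intersection points (crossings) of the multicurve, while maintaining at every stage a multicurve consisting of exactly $r$ arcs and no closed loops. The first thing I would record is a conservation principle: smoothing, as in Theorem~\ref{smoothing}, is a local operation performed at an interior intersection point, and it never touches the $2r$ endpoints of $\gamma_1, \ldots, \gamma_r$ in $M$. Consequently every multicurve appearing in the smooth resolution has exactly $2r$ endpoints, so any such multicurve with no closed loops automatically consists of exactly $r$ arcs. This reduces the lemma to exhibiting a single term of the resolution that has no closed loops.

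For the induction I would state the hypothesis for an arbitrary multicurve $m$ made of exactly $r$ arcs, allowed to self-intersect and to intersect one another, placed in generic position so that all intersections are transverse double points, with no closed loops; I then induct on the number $N$ of intersection points. When $N = 0$, the multicurve $m$ is already a disjoint collection of $r$ arcs and there is nothing to prove. When $N \geq 1$, I select one crossing $p$, consider the two local smoothings $s_p(m) = \{M_1, M_2\}$, and argue that at least one of them again yields a multicurve with exactly $r$ arcs, no closed loops, and $N-1$ crossings; the induction hypothesis applied to that multicurve then produces the desired term, which is simultaneously a term of the smooth resolution of $m$.

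The heart of the argument is the case analysis at $p$. If $p$ is the intersection of two distinct arcs, I would label the four stubs at $p$ in cyclic order and observe that the ends of the two arcs alternate; both non-crossing reconnections therefore join an endpoint-bearing stub of one arc to an endpoint-bearing stub of the other, so each of $M_1, M_2$ merges the two arcs into two honest arcs with no closed loop, and either may be chosen. If instead $p$ is a self-intersection of a single arc $\gamma$, I would trace $\gamma$ through its two passages at $p$: writing $\gamma$ as a path from one endpoint to the other that visits $p$ twice, exactly one of the two smoothings reconnects $\gamma$ into a single arc (possibly with one intermediate segment traversed in reverse), while the other splits off the intermediate segment as a closed loop. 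In this case I select the former smoothing. In both cases the chosen multicurve $m'$ has exactly $r$ arcs, no closed loops, and strictly fewer crossings, which is what the induction needs.

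The main obstacle is the self-intersection case: I must verify that, among the two local resolutions of a self-crossing, precisely one avoids creating a closed loop, which rests on the cyclic interleaving of the two passages of $\gamma$ at $p$ and is best made rigorous by the tracing argument sketched above. A secondary technical point is to confirm that smoothing at $p$ strictly decreases the crossing count, so the induction is well founded: the move removes the crossing at $p$ and, being local and not altering the rest of the diagram, creates no new transverse intersections (reversing the orientation of a segment changes nothing). I would also emphasize that, although the initial arcs $\gamma_i$ may be non-self-intersecting, self-intersections can be produced after the first smoothing, which is exactly why the induction hypothesis must be phrased for arbitrary multicurves of $r$ arcs rather than for the original configuration alone.
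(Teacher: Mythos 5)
Your proof is correct and takes essentially the same approach as the paper: induction on the number of intersection points, with a case analysis at a chosen crossing (intersection of two distinct arcs versus a self-intersection), noting that in the first case both smoothings consist of $r$ arcs while in the second exactly one smoothing avoids splitting off a closed loop. Your stub-labeling and tracing arguments, together with the endpoint-conservation remark, simply make explicit what the paper conveys through its two local pictures.
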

\begin{proof}
    We prove the result by induction on the number $n$ of intersection points of the multicurve $\{\gamma_1, ..., \gamma_r\}$.
    
    Let $p$ be an intersection point.  Then either $p$ is an point of intersection between two distinct arcs $\gamma_i$ and $\gamma_j$, or $p$ is a point of self-intersection of an arc $\gamma_k$.
    
    In the first case, smoothing at $p$ will give rise to two multicurves, each made of exactly $r$ arcs and with at most $n-1$ intersection points.
    Locally, the situation is as in the following picture, where the two arcs intersecting at $P$ are depicted around $p$.  In the picture, we draw a multicurve $\{\gamma_1, \ldots, \gamma_r\}$ instead of writing the product $x_{\gamma_1} \cdots x_{\gamma_r}$.
    
    \begin{displaymath}
		\begin{tikzpicture}
		
		\draw (-1,1) -- (1,-1); 
		\draw (-1,-1) -- (1,1); 
		
		\draw (2,0) node {$= \quad \pm$}; 
		
		\draw (3,1) .. controls (3.9,0) and (4.1,0) .. (5,1); 
		\draw (3,-1) .. controls (3.9,0) and (4.1,0) .. (5,-1); 
		
		\draw (5.5,0) node {$\pm$}; 
		
		\draw (6,1) .. controls (7,0.1) and (7,-0.1) .. (6,-1); 
		\draw (8,1) .. controls (7,0.1) and (7,-0.1) .. (8,-1); 
		\end{tikzpicture}
	\end{displaymath}  
    
    In the second case, smoothing at $p$ gives rise to two multicurves: one is made of $r$ arcs and a closed curve, and the other is made of $r$ arcs.  Both multicurves have at most $n-1$ intersection points.
    This is illustrated in the following picture.
    Note that the loop in the leftmost and rightmost terms may not be contractible.
    
    \begin{displaymath}
		\begin{tikzpicture}
		
		\draw (0,1) .. controls (1,-2) and (1,2) .. (0,-1); 

		\draw (2,0) node {$= \quad \pm$}; 
		
		\draw plot [smooth] coordinates {(3,1) (3.2,0.2) (3.8,0.5) (4,0) (3.8,-0.5) (3.2,-0.2) (3,-1)}; 
		
		\draw (5,0) node {$\pm$}; 
		
		\draw (6,1) .. controls (6.5,1) and (6.5,-1) .. (6,-1); 
		\draw (6.8,0) circle (0.2); 
		\end{tikzpicture}
	\end{displaymath} 
    
    In each case, one of the multicurves obtained by smoothing at $p$ is made only of arcs. This finishes the proof.
\end{proof}

\begin{lemma} \label{termeva}
    Let $\A(\bx,Q)$ be a cluster algebra associated to a triangulation of a surface. Let $x_1$ and $x_2$ be cluster variables. Then
    \[ x_1x_2 = \sum_{b \in \B} m_b b + \sum_{b' \in \B'} m_{b'} b'\]
    with $m_b$, $m_{b'} \in \N$ for every $b \in B$, $b' \in B'$, and
    \[ \sum_{b \in B} m_b > 0. \]
\end{lemma}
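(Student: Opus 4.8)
The plan is to split the statement into its two assertions: the non-negativity of all coefficients, and the non-triviality of the pure-arc part. The first assertion is essentially immediate. Each cluster variable $x_i$ is of the form $x_{\gamma_i}$ for an internal arc $\gamma_i$, and a single non-self-intersecting arc is a $\C$-compatible collection, so $x_1, x_2 \in \B \subseteq \B \cup \B'$. Since $\B \cup \B'$ is a positive basis by Theorem~\ref{positivebasis}, the product $x_1 x_2$ expands in this basis with non-negative integer coefficients, which gives $m_b, m_{b'} \in \N$ at once.

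The real content is therefore the strict inequality $\sum_{b \in \B} m_b > 0$, namely that at least one pure-arc basis element occurs with positive coefficient. If $\gamma_1$ and $\gamma_2$ do not intersect (in particular if they are isotopic), then $\{\gamma_1, \gamma_2\}$ is itself a $\C$-compatible collection and $x_1 x_2 = x_{\{\gamma_1,\gamma_2\}}$ is a single element of $\B$, so $\sum_{b \in \B} m_b \geq 1$ and we are done. If they do intersect, I would resolve all crossings using the skein relations of Theorem~\ref{smoothing}: applied repeatedly, they express $x_1 x_2$ as a signed sum $\sum_D \varepsilon_D\, x_D$, where $D$ ranges over the crossing-free multicurves obtained from $\{\gamma_1, \gamma_2\}$ and $\varepsilon_D \in \{+1,-1\}$. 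By Lemma~\ref{noclosedloops} applied with $r = 2$, at least one such $D_0$ consists of exactly two arcs and contains no closed loop; being crossing-free and loop-free, $D_0$ is a $\C$-compatible collection, so $x_{D_0} \in \B$.

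The main obstacle is that the skein relations carry signs, and moreover the crossing-free multicurves $D$ that do contain closed loops need not stay inside $\mathrm{span}(\B')$: a contractible loop evaluates to a scalar, so after deleting such a component a term $x_D$ can again contribute to a pure-arc basis element. One must therefore rule out a total cancellation of all pure-arc contributions. To do this I would pass to Laurent expansions in the initial cluster $\bx$ and compare leading terms: the product $x_1 x_2$ has a well-defined leading monomial $\bx^{g(\gamma_1)+g(\gamma_2)}$, the product of the two $g$-vector monomials of $x_1$ and $x_2$, occurring with coefficient $1$. I would argue that this monomial is realized as the leading term of the arc-only element $x_{D_0}$ but of no basis element involving a bracelet, since replacing an arc configuration by a bracelet $b_m(\g)$ strictly alters the relevant degree. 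As $\B \cup \B'$ is a basis, the coefficient of the unique basis element carrying this leading monomial must be positive, forcing $m_{D_0} > 0$ and hence $\sum_{b \in \B} m_b > 0$. The delicate point is precisely this leading-term comparison — in particular that no bracelet element can produce $\bx^{g(\gamma_1)+g(\gamma_2)}$ — which would rest on the positivity of bracelet expansions (Lemma~\ref{LaurentBracelet}) together with the $g$-vector parametrization of the basis.
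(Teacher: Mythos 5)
Your proposal has the same skeleton as the paper's own proof: the paper's entire argument for Lemma~\ref{termeva} is the two-line observation that non-negativity of all coefficients follows from positivity of the basis (Theorem~\ref{positivebasis}) applied to the product of the two basis elements $x_1,x_2\in\B$, and that positivity of the arc part follows from the skein relations (Theorem~\ref{smoothing}) together with Lemma~\ref{noclosedloops}. Your first two paragraphs reproduce exactly this. Where you diverge is in refusing to treat the combination of these two facts as immediate: you correctly point out that the skein resolution carries signs, and that resolution terms containing closed loops can feed back into the arc part of the basis expansion with either sign, since contractible loops evaluate to scalars and powers $b_1(\g)^k$ of an essential loop rewrite in bracelets with nonzero constant terms. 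This cancellation issue is a real subtlety that the paper's one-line proof passes over in silence, and flagging it is to your credit.

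However, your repair of this step is not a proof. All the weight is placed on the claim that the unique basis element pointed at the leading monomial $\bx^{g(\gamma_1)+g(\gamma_2)}$ lies in $\B$ rather than $\B'$, and you give no argument for it beyond the assertion that ``replacing an arc configuration by a bracelet strictly alters the relevant degree.'' That assertion is precisely the statement that $g(\gamma_1)+g(\gamma_2)$ lies in the cluster fan (i.e., is the $g$-vector of a compatible collection of arcs rather than of a collection containing a bracelet); it may well be true, but it is not obvious, it is not what Lemma~\ref{LaurentBracelet} says, and it is essentially as strong as the statement you are trying to prove. There is also a smaller slip: you identify this leading basis element with the loop-free multicurve $D_0$ supplied by Lemma~\ref{noclosedloops}, but that lemma only produces \emph{some} loop-free resolution; nothing guarantees that the resolution realizing the additive $g$-vector (the ``dominant'' one) is the loop-free one --- that is the same crux restated. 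So your proposal contains the paper's proof, correctly diagnoses the step the paper leaves implicit, but the machinery brought in to fill that step stalls exactly at the decisive point: it would require the $g$-vector parametrization of the full bracelet basis (all basis elements pointed at pairwise distinct $g$-vectors exhausting $\Z^n$) together with a genuinely new argument that the sum of the $g$-vectors of two crossing arcs never lands on a $g$-vector attached to a bracelet.
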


\begin{proof}
    This follows from Theorem \ref{positivebasis} and Lemma \ref{noclosedloops}.
\end{proof}


\section{Unistructurality of cluster algebra from unpunctured surface}\label{sect::proof}


\subsection{Proof of the main result}

\begin{theorem}\label{theo::main}
    Cluster algebras with trivial coefficients from marked surfaces without punctures are unistructural.
\end{theorem}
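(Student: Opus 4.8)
<br>

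The plan is to prove unistructurality via condition (U') from Proposition~\ref{prop::clustersAreSufficient}, exploiting the geometric model and the positive basis $\B \cup \B'$. Let $\A(\bu, \tau)$ be the cluster algebra of an unpunctured surface $(S,M)$, with set $\X$ of cluster variables, and suppose $(\bv, R)$ is a competing seed with $\A(\bv, R)$ having the same cluster variable set $\X$. I must show the two cluster algebras have the same clusters. By Proposition~\ref{prop::connected}, I may assume $Q(\tau)$ is connected, i.e.\ $(S,M)$ yields a connected cluster algebra; this reduction removes the annoyance of decomposable quivers.

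The key idea is to give an \emph{intrinsic}, structure-free characterization of which pairs of cluster variables in $\X$ are \emph{compatible} (lie in a common cluster), using only the ring $\A$ and the set $\X$ — not the seed. The geometric interpretation is that cluster variables correspond to non-self-intersecting internal arcs, and two arcs are compatible exactly when they do not intersect. I would characterize non-intersection algebraically as follows: for two cluster variables $x_1, x_2$, Lemma~\ref{termeva} writes the product $x_1 x_2 = \sum_{b\in\B} m_b b + \sum_{b'\in\B'} m_{b'} b'$ with all coefficients in $\N$. The arcs corresponding to $x_1$ and $x_2$ are compatible if and only if their product $x_1 x_2$ is itself a single element of the basis $\B$ — equivalently, $x_1 x_2$ is a cluster monomial — whereas if the arcs intersect, the skein relations force genuinely several basis terms (including terms in $\B'$ involving bracelets) to appear with positive coefficients. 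Since $\B \cup \B'$ is a basis (Theorem~\ref{positivebasis}), this distinction is detected purely by the expansion coefficients, which depend only on $\A$ and $\X$. Thus the compatibility relation on $\X$ is determined by $\X$ alone, independently of whether we use the seed $(\bu,\tau)$ or $(\bv,R)$.

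Once compatibility is intrinsic, the clusters are recoverable intrinsically too: a cluster is a maximal collection of pairwise compatible cluster variables of the correct cardinality $n$ (the rank, also intrinsic as the transcendence degree of $\F$). Both $\A(\bu,\tau)$ and $\A(\bv,R)$ have $\X$ as their cluster variable set, and both have their clusters determined by the same compatibility relation on $\X$; hence they produce the same maximal compatible sets, i.e.\ the same clusters. This establishes condition (U'), and by Proposition~\ref{prop::clustersAreSufficient} the algebra is unistructural.

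The main obstacle is the compatibility criterion itself: I must show rigorously that \emph{intersecting} arcs have products whose $\B\cup\B'$-expansion strictly exceeds a single $\B$-term, and conversely that \emph{compatible} arcs have product equal to exactly one basis element of $\B$. The forward direction uses that a cluster monomial of compatible arcs is a $\C$-compatible collection, hence literally an element of $\B$. The harder converse requires analyzing the smoothing of an intersection: the skein relations (Theorem~\ref{smoothing}) resolve a crossing into two multicurves, and Lemma~\ref{noclosedloops} guarantees at least one all-arc (hence $\B$-type) resolution, but I must ensure that the \emph{full} resolution of the intersecting pair genuinely contributes more than one basis vector — in particular that no cancellation collapses everything to a single term. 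This is where the positivity of the basis is essential: because all coefficients $m_b, m_{b'}$ are non-negative, distinct basis contributions cannot cancel, so an intersecting pair whose smoothing yields at least two distinct (arc or bracelet) multicurves must expand with at least two positive coefficients. Controlling the bookkeeping of these smoothings — especially tracking bracelets arising from self-intersecting resolutions of closed loops — and ruling out the degenerate coincidence where both resolutions give the \emph{same} basis element will be the technical heart of the argument.
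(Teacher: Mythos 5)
Your proposal has a genuine gap at its central step: the claim that compatibility is ``detected purely by the expansion coefficients, which depend only on $\A$ and $\X$.'' The basis $\B \cup \B'$ is not intrinsic data of the pair $(\A,\X)$; it is defined through the surface model, i.e.\ through the very structure $(\bu,\tau)$ whose unistructurality is to be proved ($\B$ consists of the cluster monomials of $\A(\bu,\tau)$, and $\B'$ involves bracelets, which are not even elements of $\X$). So your criterion ``$x_1x_2$ is a single element of $\B$'' characterizes compatibility \emph{in $\A(\bu,\tau)$} in terms of objects attached to $\A(\bu,\tau)$; as a bridge between the two structures it is circular, and nothing in your argument relates it to compatibility in the competing structure $\A(\bv,R)$. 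What must actually be proved is: if $v_1,v_2$ lie in a common cluster of $\A(\bv,R)$, then $v_1v_2$ expands as a single element of $\B$. This is precisely the step your proposal never addresses, and it is where the paper does its real work: taking $v_1,v_2\in\bv$, it expands $v_1v_2=\sum_b m_b b+\sum_{b'}m_{b'}b'$ and then rewrites everything as Laurent polynomials \emph{in the competing cluster $\bv$}, using positivity of Laurent expansions with respect to $\bv$ (so that no cancellation can occur), the fact that $v_1v_2$ is a monomial in $\bv$, Lemma 3.7 of \cite{CKLP13} (a cluster variable that is a Laurent monomial in $\bv$ must belong to $\bv$), and linear independence of cluster monomials, to force the expansion to collapse to a single term $b=v_1v_2$ with $b\in\B$, whence the corresponding arcs do not cross --- plus a separate sign argument to rule out the case $\sum_{b'}m_{b'}b'\neq 0$. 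None of these tools appears in your proposal; instead, the part you identify as the ``technical heart'' (crossing arcs give more than one basis term) concerns only the $\A(\bu,\tau)$ side and is comparatively easy, being essentially Lemma~\ref{termeva} together with positivity of the basis.

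There is a second, smaller gap at the end. You recover clusters as maximal collections of pairwise compatible cluster variables of cardinality $n$, but for the abstract seed $(\bv,R)$ it is not known a priori that every such maximal pairwise-compatible subset of $\X$ is a cluster of $\A(\bv,R)$: that pairwise compatibility implies simultaneous compatibility is a nontrivial statement for a general cluster algebra, and $(\bv,R)$ is not known to come from a surface. Your argument does give the inclusion that every cluster of $\A(\bv,R)$ is a cluster of $\A(\bu,\tau)$ (once compatibility is transferred, $n$ pairwise non-crossing arcs form a triangulation), but not the converse inclusion; the paper obtains the converse by an exchange-graph argument using \cite[Theorem 5]{GSV08} and Remark~\ref{rema::quivers or opposite}, and only then concludes via Proposition~\ref{prop::clustersAreSufficient}.
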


\begin{proof}
    Let $\A(\bu,Q)$ be a cluster algebra associated to a triangulation of a marked surface $(S,M)$ without punctures, where $\bu=\{u_1,\dots,u_{n}\}$.
    Denote by~$\X$ the set of cluster variables of~$\A(\bu, Q)$.
    Let $(\bv,R)$ be a seed such that the cluster variables of $\mathscr{A}(\bu,Q)$ and of $\mathscr{A}(\bv,R)$ are the same; denote by~$\Y$ the set of cluster variables of~$\mathscr{A}(\bv,R)$.
    Note that a consequence of that hypothesis is that $\bu$ and $\bv$ have the same number of elements, which is the cardinality of a transcendence basis of the (common) ambient field. We show that $\bv$ is a cluster of $\mathscr{A}(\bu,Q)$ by using a proof very similar to the one of Lemma 3.1 in \cite{BM16}.
    
    On the contrary, suppose that there exist $v_1$ and $v_2$ in $\X=\Y$ which are compatible in $\A(\bv,R)$, but not in $\A(\bu,Q)$. Without loss of generality, we can furthermore assume that $v_1$ and $v_2$ belong to $\bv$.
    Since $v_1$ and $v_2$ are not compatible in $\A(\bu,Q)$, they are associated to arcs that intersect each other in $(S,M)$.
    We know from Lemma \ref{termeva} that
    \begin{equation} \label{produitvar}
        v_1v_2 = \sum_{b \in \B} m_b b + \sum_{b' \in \B'} m'_b b',
    \end{equation}
    with $\sum_{b \in \B} m_b b \neq 0$.
    
    Using Lemma \ref{LaurentBracelet}, we know that each $b$ is a cluster monomial and each $b'$ is a Laurent polynomial in cluster variables with positive coefficients.
    Since $\bv$ is a cluster, each $b \in \B$ can be written as Laurent polynomial in $\bv$. 
    The same is true for $\sum_{b \in \B} m_b b$, so write
    $$
      \sum_{b \in \B} m_b b = \frac{p_{\bv}}{m_{\bv}},
    $$
    where $p_{\bv}$ (respectively $m_{\bv}$) is a polynomial (respectively a monomial) in $\bv$.
    
    If $\sum_{b' \in \B'} m'_b b' = 0$, we obtain that $$v_1v_2m_{\bv} = p_{\bv},$$ 
    so that~$p_{\bv}$ is a monomial in~$\bv$.
    Now, each~$b$ in the sum~$\sum_{b \in \B} m_b b$ is a product of cluster variables in~$\X=\Y$; it is thus a Laurent polynomial in~$\bv$ with positive coefficients by Theorem~\ref{theo::recollectionsCluster}.  Moreover, the only cluster variables in~$\Y$ which are Laurent monomials in~$\bv$ are the elements of~$\bv$ itself by \cite[Lemma 3.7]{CKLP13}.  
    Thus, if~$p_{\bv}$ is a monomial in~$\bv$, then the sum~$\sum_{b \in \B} m_b b$ has only one term, say~$b$, which has to be a Laurent monomial in~$\bv$; since~$b$ is a product of elements of~$\X = \Y$, it is thus a monomial in~$\bv$.
    By the linear independence of cluster monomials (see, in this generality, \cite{CL12}), this implies that~$b=v_1v_2$.  This is then the decomposition of~$b$ as a product of compatible elements of~$\X$; therefore,~$v_1$ and~$v_2$ are compatible in~$\A(\bu, Q)$, a contradiction.

    
    Now, suppose that $\sum_{b' \in \B'} m'_b b' \neq 0$.
    Because $\sum_{b' \in \B'} m'_b b'$ is a sum of Laurent polynomials in cluster variables with positive coefficients, we can write 
    $$ \sum_{b' \in \B'} m'_b b' = \frac{p_\X}{m_\X} $$
    where $p_\X$ (respectively $m_\X$) is a polynomial (respectively a monomial) with positive coefficients in $\X$.
    Now, as before, we change notation to write $p_\X$ and $m_\X$ as Laurent polynomials in $\bv$ with positive coefficients:
    $$ p_\X = \frac{p'_{\bv}}{m'_{\bv}} \text{ and } m_\X = \frac{p''_{\bv}}{m''_{\bv}}.$$
    Equation \ref{produitvar} becomes
    $$ v_1 v_2  = \frac{p_{\bv}}{m_{\bv}} + \frac{\left(\frac{p'_{\bv}}{m'_{\bv}}\right)}{\left(\frac{p''_{\bv}}{m''_{\bv}}\right)},$$
    which is equivalent to 
    $$ m_{\bv}m''_{\bv}p'_{\bv} = m'_{\bv}p''_{\bv}(y_1y_2m_{\bv} - p_{\bv}). $$
    Note that in the left-hand sight of the equation, there is a polynomial in $\bv$ with positive coefficients, while there is a minus sign on the right-hand sight.
    Since $x_1x_2m'_y$ is a monomial, we deduce that $y_1y_2m'_{\bv} = p_{\bv}$, and we obtain as above that~$y_1$ and~$y_2$ are compatible in~$\A(\bu, Q)$, a contradiction.
    
    
    Therefore, $v_1$ and $v_2$ are two compatible variables in $\A(\bu,Q)$ if they are compatible in $\A(\bv,R)$.
    
    It follows from the above that any cluster of~$\A(\bv,R)$ is a cluster of~$\A(\bu,R)$.  This also implies the converse: any cluster of~$\A(\bu,R)$ is a cluster of~$\A(\bv,R)$.  Indeed, let~$\bw$ be a seed in~$\A(\bv,R)$, and let~$\bw_1$, \ldots, $\bw_n$ be the neighbouring clusters in the exchange graph of~$\A(\bv,R)$.  Then, by~\cite[Theorem 5]{GSV08}, $\bw, \bw_1, \ldots, \bw_n$ are neighbouring clusters in~$\A(\bu,R)$, since for every~$i$, $\bw$ and~$\bw_i$ only differ by one cluster variable.   As in Remark~\ref{rema::quivers or opposite}, this allows us to recover the quiver~$Q$ in the seed~$(\bw,Q)$ containing~$\bw$, up to a change of orientation of each of its connected components.  This argument works in~$\A(\bu,R)$ as well as in~$\A(\bv,R)$, so we get a seed~$(\bw,Q')$ in~$\A(\bu,R)$.  Since the connected components of~$Q$ and~$Q'$ are the same up to a change of orientation, the cluster algebras~$\A(\bu,R)$ and~$\A(\bv,R)$ have the same clusters.  By Proposition~\ref{prop::clustersAreSufficient}, this shows that~$\A(\bu,Q)$ is unistructural.

\end{proof}

\subsection{Consequences for cluster automorphisms}
The idea of unistructurality first appeared in \cite{ASS14} while studying automorphisms between cluster algebras.

\begin{definition}[\cite{ASS12}]
    Let $\A(\bu,Q)$ be a cluster algebra, and let $f : \A(\bu,Q) \rightarrow \A(\bu,Q)$ be an automorphism of $\Z$-algebras.
    Then $f$ is called a \emph{cluster automorphism} if there exists a seed $(\bv,R)$ of $\A(\bu,Q)$, such that
    the following conditions are satisfied: \begin{itemize}
    \item $f(\bv)$ is a cluster in $\A(\bu,Q)$;
    \item $f$ is compatible with mutations, that is, for every $v \in \bv$, we have \[f\left(\mu_{v}(\bv)\right) = \mu_{f(v)}(f(\bv)).\]
    \end{itemize}
    \end{definition}
    
    Remark that the existence of such a cluster $\bv$ guarantees that the image of any cluster is a cluster.
    
    The following is a direct consequence of \cite[Lemma 5.2]{ASS14} and Theorem~\ref{theo::main}.
    
    \begin{crl}
    Let $\A(\bu,\tau)$ be a cluster algebra arising from a triangulation~$\tau$ of an unpunctured marked surface~$(S,M)$. Then a map $f:\A(\bu,\tau) \rightarrow \A(\bu,\tau)$ is a cluster automorphism if and only if $f$ is an automorphism of the ambient field which restricts to a permutation of the set of cluster variables.
    \end{crl}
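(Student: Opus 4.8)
The plan is to derive both implications directly from unistructurality (Theorem~\ref{theo::main}), which is exactly the hypothesis under which \cite[Lemma 5.2]{ASS14} identifies the two notions; I will indicate the underlying mechanism rather than reprove that lemma.

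First I would dispatch the easy implication. Suppose $f$ is a cluster automorphism. A $\Z$-algebra automorphism of $\A(\bu,\tau)$ extends uniquely to an automorphism of the fraction field, so $f$ is an automorphism of the ambient field $\F$. By the remark following the definition of cluster automorphism, $f$ sends every cluster to a cluster, and in particular sends cluster variables to cluster variables. Being bijective, $f$ therefore restricts to a permutation of the set $\X$ of cluster variables. This direction needs no surface hypothesis and holds for any cluster algebra.

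For the converse, suppose $f$ is an automorphism of $\F$ restricting to a permutation of $\X$, and set $\bv = f(\bu)$. Since $f$ is a field automorphism and $\bu$ freely generates $\F$, so does $\bv$; transporting the quiver $Q = Q(\tau)$ along $f$ yields a seed $(\bv, Q)$, and $f$ becomes an isomorphism of cluster algebras $\A(\bu,Q) \to \A(\bv,Q)$. Consequently the set of cluster variables of $\A(\bv,Q)$ is exactly $f(\X) = \X$. By Theorem~\ref{theo::main}, $\A(\bu,Q)$ is unistructural, so $\A(\bu,Q)$ and $\A(\bv,Q)$ have the same clusters; in particular $\bv = f(\bu)$ is a cluster of $\A(\bu,Q)$, which is the first condition in the definition of a cluster automorphism.

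It remains to verify compatibility with mutations at the initial seed, i.e. $f(\mu_{u_i}(\bu)) = \mu_{f(u_i)}(f(\bu))$ for each $i$. The left-hand side is the $f$-image of the mutation computed in $\A(\bu,Q)$, which by construction equals the mutation of $\bv$ at $v_i$ computed in $\A(\bv,Q)$; the right-hand side is the mutation of $f(\bu)$ at $v_i$ computed in $\A(\bu,Q)$, using the quiver $R'$ attached to the cluster $\bv$ there. By Proposition~\ref{prop::clustersAreSufficient} and Remark~\ref{rema::quivers or opposite}, $R'$ agrees with $Q$ up to reversing the orientation on each connected component. I expect this orientation ambiguity to be the only real obstacle, and I would resolve it by observing that the one-step exchange relation is symmetric in its two monomials, so reversing all arrows at a vertex leaves the resulting cluster variable unchanged. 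Hence the two sides agree, $f$ is compatible with mutations, and $f$ is a cluster automorphism. Packaging the two implications is precisely \cite[Lemma 5.2]{ASS14}.
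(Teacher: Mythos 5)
Your proposal is correct and follows the paper's route exactly: the paper proves this corollary in one line, as a direct consequence of \cite[Lemma 5.2]{ASS14} (which characterizes cluster automorphisms of any unistructural cluster algebra in precisely this way) combined with Theorem~\ref{theo::main}, which is the same combination you invoke. Your additional unpacking of the mechanism behind that lemma --- transporting the seed along $f$, applying unistructurality to identify $f(\bu)$ as a cluster, and resolving the orientation ambiguity of Remark~\ref{rema::quivers or opposite} via the symmetry of the exchange relation --- is sound and goes beyond what the paper writes, but it is not a different approach.
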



It is conjectured in \cite{ASS14} that this result is true for any cluster algebra, since, by \cite[Lemma 5.2]{ASS14}, it is true for any unistructural cluster algebra, and all cluster algebras are conjectured to be unistructural.

\section*{Acknowledgements}

The authors would like to thank Hugh Thomas for fruitful discussions and his comments on an earlier version of this work which resulted in a simplification of one of the arguments.

\bibliographystyle{alpha}
\bibliography{biblio}

\end{document}